\newenvironment{eq}{\begin{equation}}{\end{equation}}
\newenvironment{proof}{{\bf Proof}:}{\vskip 5mm }
\newtheorem{proposition}{Proposition}[subsection]
\newtheorem{lemma}[proposition]{Lemma}
\newtheorem{definition}[proposition]{Definition}
\newtheorem{example}[proposition]{Example}
\newtheorem{remark}[proposition]{Remark}
\newtheorem{problem}[proposition]{Problem}
\newtheorem{construction}[proposition]{Construction}
\newcommand{\llabel}[1]{\label{#1}}
\newcommand{\comment}[1]{}
\newcommand{\sr}{\rightarrow}
\newcommand{\dd}{\diamond}
\newcommand{\nn}{{\bf N\rm}}
\newcommand{\wt}{\widetilde}
\begin{document}
\parskip = 2mm
\begin{center}
{\bf\Large A C-system defined by a universe category\footnote{\em 2000 Mathematical Subject Classification: 
03F50, 
03B15, 
03G25 
}}

\vspace{3mm}

{\large\bf Vladimir Voevodsky}\footnote{School of Mathematics, Institute for Advanced Study,
Princeton NJ, USA. e-mail: vladimir@ias.edu}$^,$\footnote{Work on this paper was supported by NSF grant 1100938 and Clay Mathematical Institute.}
\vspace {3mm}

{July 2015}  
\end{center}

\begin{abstract}
This is the third paper in a series started in \cite{Csubsystems}.  In it we construct a C-system $CC({\cal C},p)$ starting from a category $\cal C$ together with a morphism $p:\wt{U}\sr U$, a choice of pull-back squares based on $p$ for all morphisms to $U$ and a choice of a final object of $\cal C$.  Such a quadruple is called a universe category. We then define universe category functors and construct homomorphisms of C-systems $CC({\cal C},p)$ defined by universe category functors. 

In the last section we give, for any C-system $CC$, three different constructions of pairs $(({\cal C},p),H)$ where $({\cal C},p)$ is a universe category and $H:CC \sr CC({\cal C},p)$ is an isomorphism. 
\end{abstract}

\subsection{Introduction}

The concept of a C-system in its present form was introduced in \cite{Csubsystems}. The type of the C-systems is constructively equivalent to the type of contextual categories defined by Cartmell in \cite{Cartmell1} and \cite{Cartmell0} but the definition of a C-system is slightly different from the Cartmell's foundational definition.

In \cite{Cofamodule} we constructed for any pair $(R,LM)$ where $R$ is a monad on $Sets$ and $LM$ a left $R$-module with values in $Sets$ a C-system $CC(R,LM)$. In the particular case of pairs $(R,LM)$ corresponding to binding signatures (cf. \cite{Aczel}, \cite{FPT}, \cite[p.228]{HM2007}) the regular sub-quotients of $CC(R,LM)$ are the C-systems corresponding to dependent type theories of the Martin-Lof genus.

In this paper we describe another construction that  generates C-systems. This time the input data is a quadruple that consists of a category $\cal C$, a morphism $p:\wt{U}\sr U$ in this category, a choice of pull-back squares based on $p$ for all morphisms to $U$ and a choice of a final object in $\cal C$.  Such a quadruple is called a universe category. For any universe category we construct a C-system that we denote by $CC({\cal C},p)$.  

We then define the notion of a universe category functor and construct homomorphisms of C-systems of the form $CC({\cal C},p)$ corresponding to universe category functors. For universe category functors satisfying certain conditions these homomorphisms are isomorphisms. In particular, any equivalence $F:{\cal C}\sr {\cal C}'$ together with an isomorphism $F(p)\cong p'$ (in the category of morphsims) defines a universe category functor whose associated homomorphism of C-systems is an isomorphism. This implies the C-systems that correspond to two different choices of final objects and pull-backs for the same $\cal C$ and $p$ are connected by a given isomorphism which justifies our simplified notation $CC({\cal C},p)$. 

To the best of our knowledge it is the only known construction of a C-system from a category level data that transforms equivalences into isomorphisms. Because of this fact we find it important to present both the construction of the C-system and the construction of the homomorphisms defined by universe functors in detail.

Next we explore the question of how to construct, for a given C-system $CC$,  a universe category $({\cal C},p)$ together with an isomorphism $CC\sr CC({\cal C},p)$. It is clear from the functoriality theorem of the previous section that if this problem has a solution then it has many solutions. We construct three such solutions each having certain advantages and disadvantages. 

The set of universe categories in a given Grothendieck universe has a structure of a 2-category suggested by Definition \ref{2015.03.21.def1}. It seems likely that our main construction extends to a construction of a functor from this 2-category to the 1-category of C-systems. We leave the investigations of the properties of this 2-category and of this functor for the future.

To avoid the abuse of language inherent in the use of the Theorem-Proof style of presenting mathematics when dealing with constructions we use the pair of names Problem-Construction for the specification of the goal of a construction and the description of the particular solution.

In the case of a Theorem-Proof pair one usually refers (by name or number) to the statement when using both the statement and the proof. This is acceptable in the case of theorems because the future use of their proofs is such that only the fact that there is a proof but not the particulars of the proof matter. 

In the case of a Problem-Construction pair the content of the construction often matters in the future use. Because of this we often have to refer to the construction and not to the problem and we assign in this paper numbers both to Problems and to the Constructions. 

Following the approach used in \cite{Csubsystems} we write the composition of morphisms in categories in the diagrammatic order, i.e., for $f:X\sr Y$ and $g:Y\sr Z$ their composition is written as $f\circ g$. This makes it much easier to translate between diagrams and equations involving morphisms. 

The methods of this paper are fully constructive and the style we write in is the ``formalization ready'' style where the proofs are spelled out in detail even when the assertion may appear obvious to the practitioners of a particular tradition in mathematics. This particular paper is written with having in mind the possibility of formalization both in the Zermelo-Fraenkel set theory (without the axiom of choice) and its constructive versions and in any type theory including Church's type theory or HOL. 

Following the distinction that becomes essential in the univalent formalization (cf. \cite{RezkCompletion}) we use the word ``category'' in the contexts where the the corresponding object is used in a way that is functorial for equivalences of categories and the word ``precategory'' otherwise. 

The main construction of this paper was introduced in \cite{NTS}. I am grateful to The Centre for Quantum Mathematics and Computation (QMAC) and the Mathematical Institute of the University of Oxford for their hospitality during my work on the previous version of the paper and to the Department of Computer Science and Engineering of the University of Gothenburg and Chalmers University of Technology for its the hospitality during my work on the present version.

\subsection{Construction of $CC({\cal C},p)$.}

\begin{definition}
\llabel{2009.11.1.def1}
Let $\cal C$ be a category. A universe structure on a morphism $p:\wt{U}\sr U$ in $\cal C$ is a mapping that assigns to any morphism $f:X\sr U$ in $\cal C$ a pull-back square 
$$
\begin{CD}
(X;f) @>Q(f)>>  \wt{U}\\
@Vp_{X,f}VV  @VVpV\\
X @>f>> U
\end{CD}
$$
A universe in $\cal C$ is a morphism $p$ together with a universe structure on it. 
\end{definition}
In what follows we will write $(X;f_1,\dots,f_n)$ for $(\dots((X;f_1);f_2)\dots;f_n)$.

\begin{example}\rm
\llabel{2015.04.06.ex1}
Let $G$ be a group. Consider the category $BG$ with one object $pt$ whose monoid of endomorphisms is $G$. Recall that any commutative square where all four arrows are isomorphisms is a pull-back square. Let $p:pt\sr pt$ be the unit object of $G$. Then a universe structure on $p$ can be defined by specifying, for every $g:pt\sr pt$, of the horizontal morphism $Q(g)$ in the corresponding canonical square. There are no restrictions on the choice of $Q(g)$ since for any such choice one can take the vertical morphism to be $Q(g)g^{-1}$ obtaining a pull-back square. Therefore, the set of universe structures on $p$ is $G^G$. The automorphisms of $BG$ are given by $Aut(G)$ (with two automorphisms being isomorphic as functors if they differ by an inner automorphisms of $G$). Therefore,  there are $(G^G)/Aut(G)$ isomorphism classes of categories with universes with the underlying category $BG$ and the underlying universe morphism being $Id:pt\sr pt$. Note that in this case all auto-equivalences of the category are automorphisms and so simply saying that we will consider universes up to an equivalence of the underlying category does not change the answer. To have, as is suggested by category-theoretic intuition, no more than one universe structure on a morphism one needs to consider categories with universes up to equivalences of categories with universes and then one has the obligation to prove that the constructions that are supposed to produce objects such as C-systems map equivalences of categories with universes to isomorphisms. In the case of the main construction of this paper it is achieved in Lemma \ref{2014.09.18.l1}.
\end{example}

For $f:W\sr X$ and $g:W\sr \wt{U}$ we will denote by $f*g$ the unique morphism such that 
$$(f*g)\circ p_{X,F}=f$$
$$(f*g)\circ Q(F)=g$$
\comment{\begin{remark}\rm
\llabel{2015.03.29.rm1}
Note that we make no assumption about $Q(Id_U)$ being equal to $Id_{\wt{U}}$. While we could make such an assumption we are not allowed to make such an assumption since the question of whether or not a given morphism is an identity morphism need not be decidable and therefore we can not ``normalize'' our constructions by doing a ``case'' on whether a morphism is an identity morphism or not. The importance of this observation (in the context of whether a simplex is degenerate or not) was emphasized in \cite{BCH}.
\end{remark}}
For $X'\stackrel{f}{\sr}X\stackrel{F}{\sr}U$ we let $Q(f,F)$ denote the morphism 
$$(p_{X',f\circ F}\circ f)*Q(f\circ F):(X';f\circ F)\sr (X;F)$$
such that in particular
\begin{eq}\llabel{2015.07.11.eq10}
Q(f,F)\circ Q(F)=Q(f\circ F)
\end{eq}
\begin{lemma}
\llabel{2015.04.06.l0}
The square
\begin{eq}\llabel{2015.07.11.eq1}
\begin{CD}
(X';f\circ F) @>Q(f,F)>> (X;F)\\
@Vp_{X',f\circ F} VV @VVp_{X,F}V\\
X' @>f>> X
\end{CD}
\end{eq}
is a pull-back square.
\end{lemma}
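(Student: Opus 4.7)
The plan is to recognize the square in question as the left half of a pasting diagram and apply the standard pasting lemma for pull-back squares. Concretely, I would place the square next to the canonical pull-back square for $F:X\to U$, obtaining the diagram

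$$
\begin{CD}
(X';f\circ F) @>Q(f,F)>> (X;F) @>Q(F)>> \wt{U}\\
@Vp_{X',f\circ F}VV @Vp_{X,F}VV @VVpV\\
X' @>f>> X @>F>> U
\end{CD}
$$

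First I would check that the two squares commute. The right square commutes by definition of the universe structure. For the left square, the defining equations of the operation $*$ applied to $Q(f,F)=(p_{X',f\circ F}\circ f)*Q(f\circ F)$ give immediately $Q(f,F)\circ p_{X,F}=p_{X',f\circ F}\circ f$ and, as already noted in (\ref{2015.07.11.eq10}), $Q(f,F)\circ Q(F)=Q(f\circ F)$. Thus the outer rectangle of the pasted diagram coincides with the canonical pull-back square for the morphism $f\circ F:X'\to U$, which is a pull-back by hypothesis, and the right square is a pull-back for the same reason.

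Next I would carry out the pasting argument in formalization-ready form rather than invoking the pasting lemma as a black box, since the paper aims for explicit constructions. Given a test object $W$ with morphisms $a:W\to X'$ and $b:W\to (X;F)$ satisfying $a\circ f=b\circ p_{X,F}$, I would define $h:W\to (X';f\circ F)$ by the universal property of the right-hand pull-back square (for $f\circ F$) applied to the pair $(a,\,b\circ Q(F))$. Compatibility $a\circ(f\circ F)=b\circ Q(F)\circ p$ follows from $a\circ f=b\circ p_{X,F}$ together with $p_{X,F}\circ F=Q(F)\circ p$. By construction $h\circ p_{X',f\circ F}=a$ and $h\circ Q(f\circ F)=b\circ Q(F)$.

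It then remains to verify $h\circ Q(f,F)=b$ and the uniqueness of $h$. For the first, I would compose both sides with $p_{X,F}$ and with $Q(F)$ and use the uniqueness clause of the pull-back for $F$: composition with $p_{X,F}$ gives $a\circ f=b\circ p_{X,F}$ on both sides, and composition with $Q(F)$ gives $h\circ Q(f\circ F)=b\circ Q(F)$ by (\ref{2015.07.11.eq10}) and the defining equation of $h$. For uniqueness, any competitor $h'$ satisfies $h'\circ p_{X',f\circ F}=a$ and $h'\circ Q(f,F)\circ Q(F)=b\circ Q(F)$, i.e.\ $h'\circ Q(f\circ F)=b\circ Q(F)$, so $h'=h$ by the uniqueness clause in the pull-back defining $(X';f\circ F)$. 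The main step to be careful with is the final identification $h\circ Q(f,F)=b$, where one must resist the temptation to appeal directly to $*$ on $(X;F)$ and instead route the argument through the two pull-back squares; everything else is bookkeeping with the defining equations of $*$ and the compatibility square $p_{X,F}\circ F=Q(F)\circ p$.
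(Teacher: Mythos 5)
Your proposal is correct and uses exactly the same decomposition as the paper: paste the square against the canonical pull-back square for $F$, observe that the composite is the canonical pull-back square for $f\circ F$, and conclude by the two-out-of-three property of pull-back squares. The only difference is that the paper invokes this pasting lemma as a known fact, whereas you unfold its proof explicitly; your unfolding (construction of $h$ from the outer square, verification of $h\circ Q(f,F)=b$ by composing with the two projections of $(X;F)$, and uniqueness via the outer square) is accurate.
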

\begin{proof}
Consider the diagram
$$
\begin{CD}
(X';f\circ F) @>Q(f,F)>> (X;F) @>Q(F)>> \wt{U}\\
@Vp_{X',f\circ F} VV @VVp_{X,F}V @VVp V\\
X' @>f>> X @>F>> U
\end{CD}
$$
The composition of two squares of this diagram equals the square with the sides $p_{X',f\circ F}$, $f\circ F$, $Q(f\circ F)$ and $p$, which is a pull-back square. The right hand side square in this diagram is a pull-back square. This implies that the left hand side square is a pull-back square.
\end{proof}
\begin{lemma}
\llabel{2015.07.19.l1}
If $f:X'\sr X$ is an isomorphism then $Q(f,F)$ is an isomorphism.
\end{lemma}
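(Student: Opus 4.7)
The plan is to invoke Lemma \ref{2015.04.06.l0} and then use the standard fact that pullbacks preserve isomorphisms. Concretely, Lemma \ref{2015.04.06.l0} tells us that the square with top $Q(f,F)$, bottom $f$, and vertical sides $p_{X',f\circ F}$ and $p_{X,F}$ is a pull-back. If $f$ is an isomorphism, then so is its pull-back along $p_{X,F}$, namely $Q(f,F)$.

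Since the paper is written in a ``formalization ready'' style, I would not merely cite the general folklore fact but exhibit the inverse explicitly via the universal property. Let $g : X \to X'$ be the two-sided inverse of $f$. Then the pair of morphisms
\[
p_{X,F}\circ g : (X;F) \to X', \qquad Q(F) : (X;F) \to \wt U
\]
satisfies $(p_{X,F}\circ g)\circ (f\circ F) = p_{X,F}\circ F = Q(F)\circ p$, where the last equality comes from the defining pull-back square of $(X;F)$. Hence by the universal property of $(X';f\circ F)$ there is a unique morphism
\[
h := (p_{X,F}\circ g) * Q(F) : (X;F) \to (X';f\circ F)
\]
with $h\circ p_{X',f\circ F} = p_{X,F}\circ g$ and $h\circ Q(f\circ F) = Q(F)$.

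Then I would check $h\circ Q(f,F) = \mathrm{id}_{(X;F)}$ and $Q(f,F)\circ h = \mathrm{id}_{(X';f\circ F)}$. Each of these follows from uniqueness in the relevant pull-back: for the first, both $h\circ Q(f,F)$ and $\mathrm{id}_{(X;F)}$ postcompose with $p_{X,F}$ and $Q(F)$ to give $p_{X,F}$ and $Q(F)$ respectively (using the defining equations of $Q(f,F)$ together with \eqref{2015.07.11.eq10}), so they agree by the universal property of $(X;F)$; for the second, both $Q(f,F)\circ h$ and $\mathrm{id}_{(X';f\circ F)}$ postcompose with $p_{X',f\circ F}$ to give $p_{X',f\circ F}$ (using $f\circ g = \mathrm{id}_X$) and with $Q(f\circ F)$ to give $Q(f\circ F)$, so they agree by the universal property of $(X';f\circ F)$ as given by Lemma \ref{2015.04.06.l0}.

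There is no real obstacle here; the only thing to be careful about is bookkeeping the defining equations of the starred morphism and of $Q(f,F)$, and remembering that \eqref{2015.07.11.eq10} identifies $Q(f,F)\circ Q(F)$ with $Q(f\circ F)$. The argument is entirely constructive and uses no choice beyond the chosen pull-backs already supplied by the universe structure.
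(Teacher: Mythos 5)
Your proposal is correct and follows the same route as the paper: the paper's entire proof is ``It follows from Lemma \ref{2015.04.06.l0} by general properties of pull-back squares,'' which is exactly your first paragraph. The explicit inverse $h=(p_{X,F}\circ g)*Q(F)$ and the two uniqueness checks are a sound elaboration of the ``general properties'' the paper leaves implicit, and all the bookkeeping in diagrammatic order checks out.
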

\begin{proof}
It follows from Lemma \ref{2015.04.06.l0} by general properties of pull-back squares. 
\end{proof}
\begin{lemma}
\llabel{2015.04.14.l0}
For $f':X''\sr X'$, $f:X'\sr X$ and $F:X\sr U$ one has
$$Q(f',f\circ F)\circ Q(f,F)=Q(f'\circ f,F)$$
\end{lemma}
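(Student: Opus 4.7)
The plan is to use the universal property of the pullback square that defines $(X;F)$. Both $Q(f'\circ f,F)$ and $Q(f',f\circ F)\circ Q(f,F)$ are morphisms from $(X'';f'\circ f\circ F)$ to $(X;F)$, and the latter is the apex of a pullback square with legs $p_{X,F}$ and $Q(F)$. So it suffices to check that both morphisms have the same composition with $p_{X,F}$ and with $Q(F)$.

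First I would compute the composition with $Q(F)$. Using \eqref{2015.07.11.eq10} twice, I get
\[
Q(f',f\circ F)\circ Q(f,F)\circ Q(F) \;=\; Q(f',f\circ F)\circ Q(f\circ F) \;=\; Q(f'\circ f\circ F),
\]
which by \eqref{2015.07.11.eq10} one more time is exactly $Q(f'\circ f,F)\circ Q(F)$.

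Next I would compute the composition with $p_{X,F}$. By the defining equation for $Q(f,F)$, we have $Q(f,F)\circ p_{X,F}=p_{X',f\circ F}\circ f$, and similarly $Q(f',f\circ F)\circ p_{X',f\circ F}=p_{X'',f'\circ f\circ F}\circ f'$. Stringing these together gives
\[
Q(f',f\circ F)\circ Q(f,F)\circ p_{X,F} \;=\; p_{X'',f'\circ f\circ F}\circ (f'\circ f),
\]
which is precisely $Q(f'\circ f,F)\circ p_{X,F}$ by the defining equation for $Q(f'\circ f,F)$.

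Since the pullback square with corners $p_{X,F}$ and $Q(F)$ is universal, the uniqueness part of the $*$-operation used in the definition of $Q(-,F)$ forces the two morphisms to be equal. There is no real obstacle: everything reduces to bookkeeping with the two defining equations of $Q(f,F)$, and the only subtlety is making sure the pullback being invoked is the one at $(X;F)$ (and not, say, the one at $(X'; f\circ F)$), so that the two test compositions above are the right ones to check.
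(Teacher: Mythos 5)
Your proof is correct and follows essentially the same route as the paper: both use that $(X;F)$ is a fiber product with projections $p_{X,F}$ and $Q(F)$, then verify the two test compositions via \eqref{2015.07.11.eq10} and the defining equation $Q(f,F)\circ p_{X,F}=p_{X',f\circ F}\circ f$. Nothing is missing.
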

\begin{proof}
Both sides of the equality are morphisms to $(X;F)$, therefore it is sufficient to verify that
$$Q(f',f\circ F)\circ Q(f,F)\circ Q(F)=Q(f'\circ f,F)\circ Q(F)$$
and
$$Q(f',f\circ F)\circ Q(f,F)\circ p_{X,F}=Q(f'\circ f,F)\circ p_{X,F}$$
For the first one we have
$$Q(f',f\circ F)\circ Q(f,F)\circ Q(F)=Q(f',f\circ F)\circ Q(f\circ F)=Q(f'\circ f\circ F)$$
and
$$Q(f'\circ f,F)\circ Q(F)=Q(f'\circ f\circ F)$$
and for the second one we have
$$Q(f',f\circ F)\circ Q(f,F)\circ p_{X,F}=Q(f',f\circ F)\circ p_{X', f\circ F}\circ f=p_{X'', f'\circ f\circ F}\circ f'\circ f$$
and
$$Q(f'\circ f,F)\circ p_{X,F}=p_{X'', f'\circ f\circ F}\circ f'\circ f.$$
\end{proof}

\begin{definition}
\llabel{2015.03.21.def2}
A universe category is a triple $({\cal C},p,pt)$ where $\cal C$ is a category, $p:\wt{U}\sr U$ is a morphism in $\cal C$ with a universe structure on it and $pt$ is a final object in $\cal C$.
\end{definition}
We will often denote a universe category by a pair $({\cal C},p)$. 

Let $({\cal C},p)$ be a universe category and $X\in {\cal C}$. Define by induction on $n$ pairs $(Ob_n({\cal C},p),int_{n})$ where $Ob_n=Ob_n({\cal C},p)$ are sets and $int_{n}:Ob_n\sr Ob({\cal C})$ are functions, as follows:
\begin{enumerate}
\item $Ob_0=unit$ where $unit$ is the distinguished set with only one point $tt$ and $int_0$ maps this point to $pt$.
\item $Ob_{n+1}=\amalg_{A\in Ob_n}Hom_{\cal C}(int_n(A),U)$ and $int_{n+1}(A,F)=(int_n(A);F)$.
\end{enumerate}
In what follows we will write $int$ instead of $int_{n}$ since $n$ can usually be inferred.

Define for each $n$ the function $ft_{n+1}:Ob_{n+1}\sr Ob_n$ by the formula $ft_{n+1}(A,F)=A$ and define $ft_0$ as the identity function of $Ob_0$.

For each $B=(ft(B),F)\in Ob_{n+1}$ define $p_B:int(B)\sr int(ft(B))$ as $p_{int(ft(B)),F}$. For $B\in Ob_0$ define $p_B$ as $Id_{int(B)}$.

For each $A\in Ob_m$, $B=(ft(B),F)\in Ob_{n+1}$ and $f:int(A)\sr int(ft(B))$ define $f^*(B)\in Ob_{m+1}$ as
\begin{eq}
\llabel{2015.07.11.eq6}
f^*(B)=(A,f\circ F)
\end{eq}
and $q(f,B):int(f^*(B))\sr int(B)$ as
\begin{eq}
\llabel{2015.07.11.eq7}
q(f,B)=Q(f,F)
\end{eq}
Recall that the concept of a C0-system was defined in \cite[Definition 2.1]{Csubsystems}. 
\begin{problem}
\llabel{2014.09.18.prob1a}
For each universe category $({\cal C},p,pt)$ to define a C0-system $CC0({\cal C},p)$.
\end{problem}
\begin{construction}
\llabel{2014.09.18.constr1a}\rm
We set
$$Ob(CC0({\cal C},p))=\amalg_{n\ge 0} Ob_n({\cal C},p)$$
where $Ob_n=Ob_n({\cal C},p)$ are the sets introduced above. Let 
$$int_{Ob}:Ob(CC0({\cal C},p))\sr {\cal C}$$
be the sum of the functions $int_{n}$. Let
$$Mor(CC0({\cal C},p))=\amalg_{\Gamma,\Gamma'\in Ob(CC0({\cal C},p))}Hom_{\cal C}(int_{Ob}(\Gamma),int_{Ob}(\Gamma'))$$
Define the function 
$$int_{Mor}:Mor(CC0({\cal C},p))\sr Mor({\cal C})$$
by the formula
$$int_{Mor}(\Gamma,(\Gamma',a))=a$$
We will often write simply $int$ for $int_{Ob}$ and $int_{Mor}$. 

The identity morphisms and the composition of morphisms are defined as in $\cal C$. The proofs of the axioms of a category are straightforward. 

The definition of the length function is obvious. 

We define $pt$ as the unique element $(0,tt)$ of $Ob(CC0({\cal C},p))$ of length zero. 

The function $ft:Ob(CC0)\sr Ob(CC0)$ is defined as the sum of functions $ft_n$ defined above. 

The  $p$-morphisms $p_{(n,A)}$ are defined such that $int(p_{(n,A)})=p_A$ where $p_A$ where defined above. 

Similarly one defines the morphisms $q(f,(n+1,B))$ such that $int(q(f,(n+1,B)))=q(f,B)$. 

Lemma \ref{2015.04.14.l0} shows that the structure that we have defined satisfies the axioms of a C0-system given in \cite[Definition 2.1]{Csubsystems}. 
\end{construction}
Let us also note the following formulas. For $\Delta=(n+1, (B,F))$ and $\Gamma=(n,B)$ one has
\begin{eq}\llabel{2015.07.19.eq7}
p_{\Delta}=(\Delta,(\Gamma, p_{int(B),F}))
\end{eq}
For $\Gamma'=(m,A)$, $\Gamma=(n,B)$ and $f:\Gamma'\sr \Gamma$ one has
\begin{eq}
\llabel{2015.07.19.eq2}
f^*(n+1,(B,F))=(m+1,(A,int(f)\circ F))
\end{eq}
\begin{eq}
\llabel{2015.07.19.eq3}
q(f,(n+1,(B,F)))=(f^*(\Delta),(\Delta,Q(int(f),F)))
\end{eq}
\begin{lemma}
\llabel{2015.07.13.l1}
The functions $int_{Ob}$ and $int_{Mor}$ defined above form a fully faithful functor from the category underlying the C0-system $CC0({\cal C},p)$ to $\cal C$.
\end{lemma}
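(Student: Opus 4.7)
The plan is to unpack the definitions of $int_{Ob}$ and $int_{Mor}$ and observe that fully faithfulness is essentially built into the construction.

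First I would check that $(int_{Ob}, int_{Mor})$ is a functor. The underlying category of $CC0({\cal C},p)$ was defined in Construction \ref{2014.09.18.constr1a} by taking its identities and composition to be inherited from $\cal C$: a morphism from $\Gamma$ to $\Gamma'$ is a pair $(\Gamma',a)$ with $a \in Hom_{\cal C}(int(\Gamma),int(\Gamma'))$, and composition of $(\Gamma',a):\Gamma\sr\Gamma'$ with $(\Gamma'',b):\Gamma'\sr\Gamma''$ is $(\Gamma'',a\circ b)$. Applying $int_{Mor}$ sends $(\Gamma',a)$ to $a$, so preservation of identities and of composition follows directly from the defining formulas.

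Next I would observe that for every pair of objects $\Gamma,\Gamma'$ of $CC0({\cal C},p)$ the set $Hom_{CC0}(\Gamma,\Gamma')$ is, by construction, the subset of $Mor(CC0)$ consisting of triples of the form $(\Gamma,(\Gamma',a))$ with $a\in Hom_{\cal C}(int(\Gamma),int(\Gamma'))$. The restriction of $int_{Mor}$ to this set is the map $(\Gamma,(\Gamma',a))\mapsto a$, which is a bijection onto $Hom_{\cal C}(int(\Gamma),int(\Gamma'))$ with inverse $a\mapsto (\Gamma,(\Gamma',a))$. This is exactly what it means for the functor to be fully faithful on $Hom$-sets between $\Gamma$ and $\Gamma'$.

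There is no serious obstacle here: the content of the lemma is essentially a restatement of how $Mor(CC0)$ was defined in Construction \ref{2014.09.18.constr1a}. The only thing to be careful about is to check that the bookkeeping of the source/target labels in the coproduct $\amalg_{\Gamma,\Gamma'} Hom_{\cal C}(int(\Gamma),int(\Gamma'))$ is compatible with the way $int_{Mor}$ forgets these labels, so that the bijection on $Hom$-sets is literally the restriction of $int_{Mor}$ and not merely an abstract isomorphism.
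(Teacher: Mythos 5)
Your proposal is correct and matches the paper's intent: the paper's proof is just ``Easy from the construction,'' and your unpacking — that $Mor(CC0({\cal C},p))$ is by definition the coproduct of the hom-sets $Hom_{\cal C}(int(\Gamma),int(\Gamma'))$ with $int_{Mor}$ simply stripping the labels, so each restriction to a hom-set is a bijection — is exactly the argument being alluded to. Nothing is missing.
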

\begin{proof}
Easy from the construction.
\end{proof}
\begin{remark}\rm
The image of $int$ on objects consists of those objects for which the unique morphism to $pt$ can be represented as a composition of morphisms of the form $p_{X,F}$. Note that $int$ need not be an injection on the sets of objects. For example, if $\cal C$ is the one point category with its unique structure of a universe category then $Ob(CC({\cal C},p))$ will be isomorphic to the set of natural numbers.
\end{remark}
\begin{problem}
\llabel{2014.09.18.prob1b}
For each universe category $({\cal C},p,pt)$ to define a C-system $CC({\cal C},p)$.
\end{problem}
\begin{construction}
\llabel{2014.09.18.constr1}\rm
We will define $CC({\cal C},p)$ as an extension of $CC0({\cal C},p)$ using \cite[Proposition 2.4]{Csubsystems}. In particular $Ob(CC)=Ob(CC0)$, $Mor(CC)=Mor(CC0)$ and similarly for the length function, $ft$, $p$-morphisms and $q$-morphisms. 

The canonical squares of $CC0({\cal C},p)$ are of the form 
\begin{eq}\llabel{2015.07.09.eq3}
\begin{CD}
f^*(\Gamma) @>q(f,\Gamma)>> \Gamma\\
@Vp_{f^*(\Gamma)}VV @VVp_{\Gamma}V\\
\Gamma' @>f>> ft(\Gamma)
\end{CD}
\end{eq}
For $\Gamma=(n+1,(B,F))$ where $B\in Ob_n({\cal C},p)$ and $F:int(A)\sr U$, $\Gamma'=(m,A)$ where $A\in Ob_m({\cal C},p)$,  and $f=(\Gamma',(\Gamma,a))$ the image of this square under the functor $int$ is of the form
$$
\begin{CD}
(int(A);a\circ F) @>Q(a,F)>> (int(B);F)\\
@Vp_{int(A),a\circ F}VV @Vp_{int(B),F}VV\\
int(A) @>F>> int(B)
\end{CD}
$$
This is one of the squares of the form (\ref{2015.07.11.eq1}) and therefore by Lemma \ref{2015.04.06.l0} it is a pull-back square. Since $int$ is fully faithful by Lemma \ref{2015.07.13.l1}, the squares (\ref{2015.07.09.eq3}) are pull-back squares in the codomain of a fully faithful functor and therefore they are also pull-back squares in the domain of this functor, i.e., in $CC0({\cal C},p)$. In view of \cite[Proposition 2.4]{Csubsystems} this implies that the C0-system $CC0({\cal C},p)$ has a unique structure of a C-system and we denote this C-system by $CC({\cal C},p)$.  
\end{construction}
\begin{remark}
\rm
\llabel{2015.07.11.rem1}
Recall that in \cite{Csubsystems} we suggested the notation $Ob_n(CC)$ for the set of objects of length $n$ of a C-system $CC$. We will avoid using this notation here because the sets $Ob_n({\cal C},p)$ are not equal to the subsets of elements of length $n$ in $CC({\cal C},p)$. Indeed, the elements of $\{\Gamma\in Ob(CC({\cal C},p))\,|\,l(\Gamma)=n\}$ are not the elements of $Ob_n({\cal C},p)$ but pairs of the form $(n,A)$ where $A\in Ob_n({\cal C},p)$.
\end{remark}
\begin{example}\rm
\llabel{2015.06.15.ex1}
An important example of a C-system of the form $CC({\cal C},p)$  is ``the'' C-system $Fam$ of families of sets considered in \cite{Cartmell0} and \cite{Cartmell1}. The definition of $Fam$ in \cite[p.238]{Cartmell1} as well as the preceding it discussion in \cite[p.232]{Cartmell1} is somewhat incomplete in that the notion of ``a set'' and moreover the notion of ``a family of sets'' are taken as being  uniquely determined by some previous agreement that is never explicitly referred to. 

To define $Fam$ as a C-system of the form $CC({\cal C},p)$ let us choose two Grothendieck universes $U$ and $U_1$ in our set theory such that $U_1$ is an element of $U$. One then defines the category $Sets(U)$ of sets as the category whose set of objects is $U$ and such that for $X,Y\in U$ the set of morphisms from $X$ to $Y$ in $Sets(U)$ is the set of functions from $X$ to $Y$ in the ambient set theory (which automatically is an element of $U$). This category will contain $U_1$ as an object and also, because of the closure conditions that $U$ satisfies, it will contain as an object the set $\wt{U}_1$ of pairs $(X,x)$ where $X\in U_1$ and $x\in X$. Since morphisms in $Sets$ are the same as functions in the ambient set theory we also get $p_{U_1}:\wt{U}_1\sr U_1$ that takes $(X,x)$ to $X$. Using the standard construction of pull-backs in sets we obtain a universe structure on $p$. Now we can define:
$$Fam(U,U_1):=CC(Sets(U),p_{U_1})$$
The explicit definition given in \cite{Cartmell1} avoids the use of the second universe (universe $U$ in our notations) by constructing the same C-system ``by hand''. In our approach we have to use $U$ but the resulting C-system does not depend on $U$. Indeed, if our set theory assumes two Grothendieck universes $U$ and $U'$ such that both contain $U_1$ as an element then one can show that
\begin{eq}
\llabel{2015.06.15.eq1}
CC(Sets(U),p_{U_1})=CC(Sets(U'),p_{U_1})
\end{eq}
where the equality means in particular that the sets of objects of these two C-systems are equal as sets. Because of this one can denote this C-system as $Fam(U_1)$.
\end{example}

\subsection{On homomorphisms of C-systems}
We will need below the concept of a homomorphism of C-systems. Homomorphisms of C-systems were defined in \cite[Remark 2.8]{Csubsystems}. Let us recall it here in a more detailed form.
\begin{definition}
\llabel{2015.06.29.def1}
Let $CC_1$, $CC_2$ be C-systems. A homomorphism $F$ from $CC_1$ to $CC_2$ is a pair of functions $F_{Ob}:Ob(CC_1)\sr Ob(CC_2)$, $F_{Mor}:Mor(CC_1)\sr Mor(CC_2)$ such that:
\begin{enumerate}
\item $F$ commutes with the length functions, i.e., for all $X\in Ob(CC_1)$ one has
$$l(F_{Ob}(X))=l(X)$$
\item $F$ commutes with the $ft$ function, i.e., for all $X\in Ob(CC_1)$ one has
$$ft(F_{Ob}(X))=F_{Ob}(ft(X))$$
\item $F$ is a functor, i.e., one has:
\begin{enumerate}
\item $F_{Mor}$ and $F_{Ob}$ commute with the domain and codomain functions,
\item for all $X\in Ob(CC_1)$ one has 
$$F_{Mor}(Id_X)=Id_{F_{Mor}(X)}$$
\item for all $f,g\in Mor(CC_1)$ of the form $f:X\sr Y$, $g:Y\sr Z$ one has
$$F_{Mor}(f\circ g)=F_{Mor}(f)\circ F_{Mor}(g)$$
\end{enumerate}
\item $F$ takes canonical projections to canonical projections, i.e., for all $X\in Ob(CC_1)$ one has
$$p_{F_{Ob}(X)}=F_{Mor}(p_X)$$
\item $F$ takes $q$-morphisms to $q$-morphisms, i.e., for all $X,Y\in Ob(CC_1)$ such that $l(Y)>0$ and all $f:X\sr ft(Y)$ one has
$$F_{Mor}(q(f,Y))=q(F_{Mor}(f),F_{Ob}(Y))$$
\item $F$ takes $s$-morphisms to $s$-morphisms, i.e., for all $X,Y\in Ob(CC_1)$ such that $l(Y)>0$ and $f:X\sr Y$ one has
$$s_{F_{Mor}(f)}=F_{Mor}(s_f)$$
\end{enumerate}
\end{definition}
In what follows we will write $F$ for both $F_{Ob}$ and $F_{Mor}$ since the choice of which one is meant is determined by the type of the argument. Note that the condition that $F$ commutes with the domain function together with the $q$-morphism condition implies that for all $X,Y\in Ob(CC_1)$ such that $l(Y)>0$ and all $f:X\sr ft(Y)$ one has
\begin{eq}
\llabel{2015.06.29.eq1}
F(f^*(Y))=F(f)^*(F(Y))
\end{eq}
\begin{lemma}
\llabel{2015.06.29.l2}
Let $F:CC_1\sr CC_2$ and $G:CC_2\sr CC_3$ be homomorphisms of C-systems. Then the compositions of functions $F_{Ob}\circ G_{Ob}$ and $F_{Mor}\circ G_{Mor}$ is a homomorphism of C-systems.
\end{lemma}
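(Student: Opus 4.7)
The plan is to define the composition $H := G \circ F$ componentwise, that is, $H_{Ob} := G_{Ob} \circ F_{Ob}$ and $H_{Mor} := G_{Mor} \circ F_{Mor}$, and then to verify each of the six clauses of Definition \ref{2015.06.29.def1} for $H$ by applying the corresponding clause for $F$ and then for $G$. Since each clause of Definition \ref{2015.06.29.def1} has the form ``$H$ intertwines a piece of C-system structure on $CC_1$ with the same piece on $CC_2$'', and since this intertwining property is closed under composition of functions, the entire verification is mechanical.

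Concretely, for clause (1) I would compute $l(H(X)) = l(G(F(X))) = l(F(X)) = l(X)$, using the length-preservation property for $G$ and then for $F$. Clause (2) follows from $ft(H(X)) = ft(G(F(X))) = G(ft(F(X))) = G(F(ft(X))) = H(ft(X))$, using the $ft$-compatibility of $G$ and then of $F$. Clause (3) is the standard fact that the componentwise composition of two functors is a functor, which I would expand only as far as needed: commutation of $H_{Mor}$ and $H_{Ob}$ with domain and codomain comes from the corresponding property for $F$ and $G$, preservation of identities comes from $G(F(Id_X)) = G(Id_{F(X)}) = Id_{G(F(X))}$, and preservation of composition comes from $G(F(f \circ g)) = G(F(f) \circ F(g)) = G(F(f)) \circ G(F(g))$.

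For clauses (4), (5), (6) I would follow the same two-step pattern. For clause (4): $p_{H(X)} = p_{G(F(X))} = G(p_{F(X)}) = G(F(p_X)) = H(p_X)$. For clause (5), given $f : X \to ft(Y)$ in $CC_1$, applying the $q$-morphism condition for $F$ gives $F(q(f,Y)) = q(F(f), F(Y))$, and then applying it for $G$ to the morphism $F(f) : F(X) \to F(ft(Y)) = ft(F(Y))$ (using clause (2) for $F$ and clause (3a) so that $F(f)$ has the correct codomain) gives $G(q(F(f), F(Y))) = q(G(F(f)), G(F(Y))) = q(H(f), H(Y))$. Clause (6) is analogous, with $s$ in place of $q$, with the only point to check being that $F(f)$ is a valid argument for the $s$-morphism construction in $CC_2$, which again follows from the commutation of $F$ with domain and codomain.

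There is essentially no obstacle in this proof; the only subtlety worth flagging is that in clauses (5) and (6) one must verify at each step that the morphism to which the inner homomorphism's axiom is applied actually has the codomain (or source) required by the $q$- or $s$-construction in the intermediate C-system, and this is precisely what the intertwining properties already established in clauses (1)--(3) guarantee. I would therefore organize the write-up as one short paragraph per clause, with clauses (5) and (6) grouped since their structure is identical.
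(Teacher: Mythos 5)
Your proposal is correct, and it is precisely the routine clause-by-clause verification that the paper itself declines to write out (the paper's ``proof'' of Lemma \ref{2015.06.29.l2} consists only of the remark that the argument is straightforward but long and is deferred to a formal version). The one detail you rightly flag --- that in clauses (5) and (6) the intermediate morphism $F(f)$ must be checked to be a legitimate argument for the $q$- and $s$-constructions in $CC_2$, which follows from clauses (1)--(3) for $F$ --- is the only place where the verification is not purely formal, and your handling of it is exactly right.
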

\begin{proof}
The proof is relatively straightforward but long and we leave it for the formal version(s) of the paper.
\end{proof}
\begin{remark}\rm
\llabel{2015.06.29.rem1}
Since homomorphisms of C-systems are pairs of functions between sets satisfying certain conditions and the composition is given by composition of these functions, the associativity and unitality of this composition follows easily from the associativity and unitality of the composition of functions between sets. Therefore, if we restrict our attention to the C-systems whose sets $Ob$ and $Mor$ are elements of a chosen set (``universe'') $U$ that contains natural numbers and is closed under the power-set operation, then such C-systems, their homomorphisms, compositions of these homomorphisms and the identity homomorphisms form a category of C-systems in $U$. 
\end{remark} 
\begin{lemma}
\llabel{2015.06.29.l1}
Let $CC_1$, $CC_2$, $F_{ob}$ and $F_{Mor}$ be as above. Assume further that these data satisfies all of the conditions of the definition except, possibly, the $s$-morphisms condition. Then it satisfies the $s$-morphisms condition and forms a homomorphism of C-systems.
\end{lemma}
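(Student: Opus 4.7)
The plan is to use the uniqueness characterization of $s$-morphisms in a C-system. Recall that for $f : X \to Y$ with $l(Y) > 0$, the section $s_f : X \to (f \circ p_Y)^*(Y)$ is characterized uniquely by the two equations
\begin{eq}
s_f \circ p_{(f\circ p_Y)^*(Y)} = Id_X, \qquad s_f \circ q(f\circ p_Y, Y) = f.
\end{eq}
So the strategy is simply to show that $F(s_f)$ satisfies the analogous equations for $s_{F(f)}$ and then invoke this uniqueness.

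First I would verify that $F(s_f)$ has the expected source and target, so that the comparison with $s_{F(f)}$ even makes sense. Since $F$ commutes with domain and codomain, the source of $F(s_f)$ is $F(X)$. For the target, equation (\ref{2015.06.29.eq1}) combined with the functoriality of $F$ and the assumed preservation of $p$-morphisms gives
\begin{eq}
F\bigl((f\circ p_Y)^*(Y)\bigr) = F(f\circ p_Y)^*(F(Y)) = \bigl(F(f)\circ F(p_Y)\bigr)^*(F(Y)) = \bigl(F(f)\circ p_{F(Y)}\bigr)^*(F(Y)),
\end{eq}
which is exactly the codomain of $s_{F(f)}$.

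Next I would apply $F$ to each of the two defining equations of $s_f$. From $s_f \circ p_{(f\circ p_Y)^*(Y)} = Id_X$, functoriality together with the identity-preservation axiom gives $F(s_f) \circ F(p_{(f\circ p_Y)^*(Y)}) = Id_{F(X)}$, and the $p$-morphism condition combined with the codomain computation above rewrites this as $F(s_f) \circ p_{(F(f)\circ p_{F(Y)})^*(F(Y))} = Id_{F(X)}$. Similarly, applying $F$ to $s_f \circ q(f\circ p_Y, Y) = f$ and using functoriality together with the $q$-morphism and $p$-morphism conditions produces $F(s_f) \circ q(F(f)\circ p_{F(Y)}, F(Y)) = F(f)$.

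These are precisely the defining equations for $s_{F(f)}$, so by uniqueness $F(s_f) = s_{F(f)}$, which verifies the remaining condition and completes the proof. There is no real obstacle here: the whole argument is a bookkeeping check that each piece of data used to characterize $s_f$ is preserved by the already-assumed portions of the homomorphism structure. The only point to watch is to apply (\ref{2015.06.29.eq1}) early, so that the two morphisms being compared live between the same objects before one starts chasing equations.
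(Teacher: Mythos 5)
Your argument is correct and is essentially the paper's own proof: both first match up the codomains via functoriality, the $p$-morphism condition and (\ref{2015.06.29.eq1}), then compare $F(s_f)$ and $s_{F(f)}$ by composing with $q(F(f)\circ p_{F(Y)},F(Y))$ and $p_{(F(f)\circ p_{F(Y)})^*(F(Y))}$ and using the two defining axioms of the operation $s$ in $CC_1$ and $CC_2$. Your appeal to ``uniqueness of the section satisfying the two equations'' is exactly the paper's appeal to the canonical square of $CC_2$ being a pull-back square, so there is no substantive difference.
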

\begin{proof}
Let $f:X\sr Y$ be as in the $s$-morphism condition. We need to show that $F(s_f)=s_{F(f)}$. Observe first that the right hand side is well defined since $l(F(Y))=l(Y)>0$. We have $F(s_f):F(X)\sr F((f\circ p_Y)^*(Y))$ and $s_{F(f)}:F(X)\sr (F(f)\circ p_{F(Y)})^*(F(Y))$. One proves that codomains of both morphisms are equal using that $F$ is a functor, the $p$-morphisms condition and (\ref{2015.06.29.eq1}). 

Since the canonical squares of $CC_2$ are pull-back squares the object $(F(f)\circ p_{F(Y)})^*(F(Y))$ is a fiber product with the projections $q(F(f)\circ p_{F(Y)},F(Y))$ and $p_{F(f)^*(F(Y)}$. Therefore it is sufficient to check that one has
\begin{eq}\llabel{2015.06.29.eq2}
F(s_f)\circ q(F(f)\circ p_{F(Y)},F(Y))=s_{F(f)}\circ q(F(f)\circ p_{F(Y)},F(Y))
\end{eq}
and
\begin{eq}\llabel{2015.06.29.eq3}
F(s_f)\circ p_{F(f)^*(F(Y)}=s_{F(f)}\circ p_{F(f)^*(F(Y)}
\end{eq}
We have
$$F(s_f)\circ q(F(f)\circ p_{F(Y)},F(Y))=F(s_f)\circ q(F(f)\circ F(p_Y),F(Y))=$$$$F(s_f)\circ q(F(f\circ p_Y),F(Y))=F(s_f)\circ F(q(f\circ p_Y,Y))=F(s_f\circ q(f\circ p_Y,Y))=F(f)$$
where the first equality holds by condition (4) of Definition \ref{2015.06.29.def1}, the second and the fourth equalities by condition (3), the third equality by condition (5) and the fifth equality by axiom \cite[Definition 2.3(3)]{Csubsystems} of the operation $s$ for $CC_1$. 

On the other hand
$$s_{F(f)}\circ q(F(f)\circ p_{F(Y)},F(Y))=F(f)$$
directly by the axiom \cite[Definition 2.3(3)]{Csubsystems} of the operation $s$ for $CC_2$. This proves (\ref{2015.06.29.eq2}). 

For the equation (\ref{2015.06.29.eq3}) we have
$$F(s_f)\circ p_{F(f)^*(F(Y)}=F(s_f)\circ p_{F(f^*(Y))}=F(s_f)\circ F(p_{f^*(Y)})=F(s_f\circ p_{f^*(Y)})=$$$$F(Id_{X})=Id_{F(X)}$$
where the first equation holds by (\ref{2015.06.29.eq1}), the second one by condition (4), the third one by condition (3), the fourth one by the axiom \cite[Definition 2.3(2)]{Csubsystems} of the operation $s$ for $CC_1$ and the fifth one by condition (3).

On the other hand 
$$s_{F(f)}\circ p_{F(f)^*(F(Y)}=Id_{F(X)}$$
directly by the axiom \cite[Definition 2.3(2)]{Csubsystems} of the operation $s$ for $CC_2$.
This completes the proof of Lemma \ref{2015.06.29.l1}. 
\end{proof}
\begin{remark}\rm
\llabel{2015.o6.29.rem2}
As defined in \cite{Csubsystems}, a C-system without operation $s$ is called a C0-system. The pairs $F_{Ob},F_{Mor}$ that satisfy all of the conditions of Definition \ref{2015.06.29.def1} other than, possibly, the $s$-morphism condition are homomorphisms of C0-systems. Therefore, if one defines a categories of C-systems and C0-systems based on a particular universe of sets as outlined in Remark \ref{2015.06.29.rem1} then Lemma \ref{2015.06.29.l1} implies that the forgetting functor from the category of C-systems in $U$ to C0-systems in $U$ is a full embedding.
\end{remark}

\subsection{Functoriality of $CC({\cal C},p)$}

\begin{definition}
\llabel{2015.03.21.def1}
Let $({\cal C},p,pt)$ and $({\cal C}',p',pt')$ be universe categories. A functor of universe categories from $({\cal C},p,pt)$ to $({\cal C}',p',pt')$ is a triple $(\Phi,\phi,\wt{\phi})$ where $\Phi:{\cal C}\sr {\cal C}'$ is a functor and $\phi:\Phi(U)\sr U'$, $\wt{\phi}:\Phi(\wt{U})\sr \wt{U}'$ are morphisms such that:
\begin{enumerate}
\item $\Phi$ takes the canonical pull-back squares based on $p$ to pull-back squares,
\item $\Phi$ takes $pt$ to a final object of ${\cal C}'$,
\item the square
$$
\begin{CD}
\Phi(\wt{U}) @>\wt{\phi}>> \wt{U}'\\
@V\Phi(p)VV @VVp'V\\
\Phi(U) @>\phi>> U'
\end{CD}
$$
is a pull-back square.
\end{enumerate}
\end{definition}

Let 
$$(\Phi,\phi,\wt{\phi}):({\cal C},p,pt)\sr ({\cal C}',p',pt')$$
be a functor of universes categories. Let $Ob_n=Ob_n({\cal C},p)$ and $Ob'_n=Ob_n({\cal C}',p')$.  Let $int$ and $int'$ be the corresponding functions to $\cal C$ and $\cal C'$. 

Denote by $\psi$ the isomorphism $\psi:pt'\sr \Phi(pt)$.   Define, by induction on $n$, pairs $(H_n, \psi_n)$ where $H_n:Ob_n\sr Ob'_n$ and $\psi_n$ is a family of isomorphisms of the form %
$$\psi_n(A):int'(H_n(A))\sr \Phi(int(A))$$
given for all $A\in Ob_n$. We set:
\begin{enumerate}
\item for $n=0$, $H_0$ is the unique map from a one point set to a one point set and $\psi_0(A)=\psi$,
\item for the successor of $n$ we set
\begin{eq}\llabel{2015.07.11.eq8}
H_{n+1}(A,F)=(H_n(A),\psi_n(A)\circ \Phi(F)\circ \phi)
\end{eq}
and define 
$$\psi_{n+1}(A,F):(int(H_n(A));\psi_n(A)\circ \Phi(F)\circ \phi)\sr \Phi(int(A,F))$$
\end{enumerate}
as the unique morphism such that the left hand side square of the diagram
\begin{eq}\llabel{2009.10.26.eq2}\label{2015.07.09.eq4}
\begin{CD}
int'(H_{n+1}(A,F)) @>\psi_{n+1}(A,F)>> \Phi(int(A,F)) @>\Phi(Q(F))>> \Phi(\wt{U}) @>\wt{\phi}>>\wt{U}'\\
@Vp_{H_{n+1}(A,F)}VV @VV\Phi(p_{(A,F)})V @VV\Phi(p)V @VVp'V\\
int'(H_n(A)) @>\psi_n(A)>> \Phi(int(A)) @>\Phi(F)>> \Phi(U) @>\phi>> U'
\end{CD}
\end{eq}
commutes, i.e.,
\begin{eq}\llabel{2015.07.11.eq15}
\psi(A,F)\circ \Phi(p_{(A,F)})=p_{H(A,F)}\circ \psi(A)
\end{eq}
and 
\begin{eq}\llabel{2015.07.11.eq9}
\psi_{n+1}(A,F)\circ \Phi(Q(F))\circ\wt{\phi}=Q(\psi_n(A)\circ \Phi(F)\circ \phi)
\end{eq}
Note that the existence and uniqueness of $\psi_{n+1}(A,F)$ follows from the fact that the right hand side squares of (\ref{2009.10.26.eq2}) are pull-back squares as a corollary of the definition of a universe category functor and the fact that the canonical square for the morphism $\psi_n(A)\circ \Phi(F)\circ \phi$ commutes. 

Moreover since the outer square of (\ref{2009.10.26.eq2}) is a pull-back square, the left-most square commutes and the two right hand side squares are pull-back squares we conclude that the left hand side square is a pull-back square. In combination with the inductive assumption that $\psi_n(A)$ is an isomorphism this implies that $\psi_{n+1}(A,F)$ is an isomorphism.

In what follows we will write $\psi(A)$ instead of $\psi_n(A)$ since $n$ can often be inferred. 
\begin{lemma}
\llabel{2015.07.11.l1}
The functions $H$ commute with the functions $ft$, i.e., for $A\in Ob_n$ one has
$$ft(H(A))=H(ft(A))$$
\end{lemma}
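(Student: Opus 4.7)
My plan is to prove this by direct case analysis on $n$, unfolding the recursive definitions of $H$ and $ft$; no genuine induction on the length is required because both $H$ and $ft$ are defined by the same one-step recursion, so the equality reduces immediately to the definitions on each layer.

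In the base case $n=0$, the set $Ob_0$ consists of the single element $tt$, and by definition $ft_0$ is the identity function of $Ob_0$. The same holds for $Ob_0'$ and $ft_0'$ in $({\cal C}',p',pt')$. Therefore both $H_0(ft_0(A))$ and $ft_0'(H_0(A))$ coincide with $H_0(A)$, and the equality holds trivially.

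For the inductive layer, let $n=m+1\ge 1$ and write $A=(A',F)\in Ob_{m+1}$ with $A'\in Ob_m$ and $F:int(A')\sr U$. By the definition of the family of functions $ft$ on $({\cal C},p)$ one has $ft_{m+1}(A',F)=A'$, so $H_m(ft_{m+1}(A',F))=H_m(A')$. On the other hand, by equation (\ref{2015.07.11.eq8}) one has
$$H_{m+1}(A',F)=(H_m(A'),\,\psi_m(A')\circ\Phi(F)\circ\phi),$$
and applying the definition of $ft_{m+1}'$ on $({\cal C}',p')$ to this pair yields $ft_{m+1}'(H_{m+1}(A',F))=H_m(A')$. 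The two sides coincide, which is precisely the required identity.

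I do not foresee any substantive obstacle: the statement is really a consistency check that the recursive clause (\ref{2015.07.11.eq8}) in the definition of $H$ is compatible with the projection $ft$ on both sides, and the compatibility is visible directly on the definitions. The only thing to be careful about is distinguishing the two conventions on the base layer (where $ft_0$ is declared to be the identity), so that the $n=0$ case is not overlooked.
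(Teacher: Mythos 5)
Your proof is correct and matches the paper's approach: the paper simply says the lemma is ``immediate from the construction,'' and your case analysis (using that $ft_0$ is the identity on the one-point set $Ob_0$, and that equation (\ref{2015.07.11.eq8}) makes the first component of $H_{n+1}(A,F)$ equal to $H_n(A)$) is exactly the unfolding of definitions that justifies that claim. No gaps.
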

\begin{proof}
Immediate from the construction.
\end{proof}
Let $A\in Ob_{m}$, $A'\in Ob_{m'}$ and $a:int(A)\sr int(A')$. Define a morphism 
$$H(a):int'(H(A))\sr int'(H(A'))$$
as 
\begin{eq}
\llabel{2015.07.11.eq5}
H(a)=\psi(A)\circ \Phi(a)\circ \psi(A')^{-1}
\end{eq}
\begin{lemma}
\llabel{2015.07.11.l2}
For $\bf\Phi$ as above one has:
\begin{enumerate}
\item for $A\in Ob_n$ one has $H(Id_{int(A)})=Id_{int(H(A))}$,
\item for $a':int(A'')\sr int(A')$ and $a:int(A')\sr int(A)$ one has $H(a'\circ a)=H(a')\circ H(a)$.
\end{enumerate} 
\end{lemma}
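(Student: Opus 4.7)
The plan is to unfold the defining formula \eqref{2015.07.11.eq5} for $H(a)$ and use only functoriality of $\Phi$ together with the fact, established in the paragraph after diagram \eqref{2015.07.09.eq4}, that each $\psi(A)$ is an isomorphism. Both parts reduce to algebraic cancellations.

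For part (1), I would simply compute
\[
H(Id_{int(A)}) = \psi(A) \circ \Phi(Id_{int(A)}) \circ \psi(A)^{-1} = \psi(A) \circ Id_{\Phi(int(A))} \circ \psi(A)^{-1} = Id_{int'(H(A))},
\]
where the middle equality uses that $\Phi$, being a functor, preserves identities, and the last equality uses that $\psi(A)$ is an isomorphism.

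For part (2), I would chain the two definitions and use that $\psi(A')^{-1}\circ\psi(A') = Id_{\Phi(int(A'))}$, then apply functoriality of $\Phi$ to merge the two $\Phi$-factors:
\[
H(a')\circ H(a) = \psi(A'')\circ\Phi(a')\circ\psi(A')^{-1}\circ\psi(A')\circ\Phi(a)\circ\psi(A)^{-1} = \psi(A'')\circ\Phi(a'\circ a)\circ\psi(A)^{-1} = H(a'\circ a).
\]

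There is no real obstacle here: the whole content of the lemma is that the definition \eqref{2015.07.11.eq5} is precisely the standard ``conjugation by an isomorphism'' recipe for transporting a functor along a natural family of isomorphisms, and such a transport is automatically functorial. The only minor bookkeeping point is to check that the source/target labels on $\psi$ line up correctly with the source/target of each $a$ and $a'$, which is immediate from the typing prescribed just before \eqref{2015.07.11.eq5}.
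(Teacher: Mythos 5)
Your proof is correct and is exactly what the paper means by its one-line ``Immediate from the construction'': unfolding (\ref{2015.07.11.eq5}), cancelling $\psi(A')^{-1}\circ\psi(A')$, and invoking functoriality of $\Phi$ is the whole argument, and your composites are typed consistently with the paper's diagrammatic order. No gaps.
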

\begin{proof}
Immediate from the construction.
\end{proof}
\begin{lemma}
\llabel{2015.07.11.l3}
For $A\in Ob_n$ one has $H(p_A)=p_{H(A)}$.
\end{lemma}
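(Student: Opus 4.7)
The plan is to split on the length of $A$ and in each case use the commutativity already encoded in the definition of $\psi$.

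If $n = 0$, then $A$ is the unique element of $Ob_0$, $ft(A) = A$, and by the definition of $p_A$ for objects of length zero we have $p_A = Id_{int(A)}$. Similarly $p_{H(A)} = Id_{int'(H(A))}$. By the first part of Lemma~\ref{2015.07.11.l2}, $H(Id_{int(A)}) = Id_{int'(H(A))}$, which gives the desired equality.

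If $n > 0$, write $A = (B,F)$ with $B = ft(A) \in Ob_{n-1}$ and $F:int(B)\to U$. By definition of $p_A$ for positive-length objects, $p_A = p_{int(B),F} = \Phi^{-1}\!\text{-free notation for }p_{(B,F)}$ in $\mathcal{C}$. Unwinding the definition~(\ref{2015.07.11.eq5}) of $H$ on morphisms, we have
\begin{eq}
H(p_A) = \psi(A)\circ \Phi(p_A)\circ \psi(ft(A))^{-1} = \psi(B,F)\circ \Phi(p_{(B,F)})\circ \psi(B)^{-1}.
\end{eq}
By equation~(\ref{2015.07.11.eq15}), which is one half of the defining property of $\psi_{n+1}(B,F)$ as recorded in the commutativity of the left-hand square of~(\ref{2015.07.09.eq4}), we have
\begin{eq}
\psi(B,F)\circ \Phi(p_{(B,F)}) = p_{H(B,F)}\circ \psi(B) = p_{H(A)}\circ \psi(B).
\end{eq}
Composing on the right with $\psi(B)^{-1}$ (which exists because $\psi(B)$ is an isomorphism, as was noted after diagram~(\ref{2009.10.26.eq2})) yields $H(p_A) = p_{H(A)}$, as required.

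There is essentially no obstacle here: the content of the lemma is already packaged in the commutativity of the left-hand square of~(\ref{2009.10.26.eq2}), and all that is needed is to match the notation of~(\ref{2015.07.11.eq5}) with equation~(\ref{2015.07.11.eq15}) and to handle the length-zero base case separately using Lemma~\ref{2015.07.11.l2}(1). The only mild care needed is to remember that $ft_0$ is the identity so that $\psi(ft(A)) = \psi(A)$ in the base case, and that $ft(A) = B$ in the inductive case so that $\psi(ft(A)) = \psi_{n-1}(B)$ matches the $\psi(B)$ appearing in~(\ref{2015.07.11.eq15}).
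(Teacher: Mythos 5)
Your proof is correct and follows essentially the same route as the paper: reduce the positive-length case to equation~(\ref{2015.07.11.eq15}), i.e.\ the commutativity of the left-hand square of~(\ref{2009.10.26.eq2}), after unwinding the definition~(\ref{2015.07.11.eq5}) of $H$ on morphisms. The only difference is that you spell out the length-zero base case via Lemma~\ref{2015.07.11.l2}(1) where the paper simply calls it obvious.
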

\begin{proof}
If $n=0$ the statement is obvious. For $(A,F)\in Ob_{n+1}$ we have
$$H(p_{(A,F)})=\psi(A,F)\circ \Phi(p_{(A,F)})\circ \psi(A)^{-1}$$
Therefore we need to show that 
$$\psi(A,F)\circ \Phi(p_{(A,F)})=p_{H(A,F)}\circ \psi(A)$$
which is (\ref{2015.07.11.eq15}).
\end{proof}
\begin{lemma}
\llabel{2015.07.11.l4}
Let $A\in Ob_m$, $B=(ft(B),F)\in Ob_{n+1}$ and $a:int(A)\sr int(ft(B))$ is a morphism. Then one has
\begin{eq}\llabel{2015.07.11.eq1b}
H(a^*(B))=H(a)^*(H(B))
\end{eq}
and
\begin{eq}\llabel{2015.07.11.eq2}
H(q(a,B))=q(H(a),H(B))
\end{eq}
\end{lemma}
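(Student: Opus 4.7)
Write $B = (ft(B), F)$ and set $G := \psi(ft(B)) \circ \Phi(F) \circ \phi$, so that by (\ref{2015.07.11.eq8}) we have $H(B) = (H(ft(B)), G)$. My plan is to first dispatch (\ref{2015.07.11.eq1b}) by unfolding definitions, and then to deduce (\ref{2015.07.11.eq2}) by checking that its left-hand side satisfies the two projection equations that characterize $Q(H(a), G)$ as the unique morphism into the pull-back square of Lemma \ref{2015.04.06.l0}.

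For (\ref{2015.07.11.eq1b}): the left side is $H(a^*(B)) = H(A, a \circ F) = (H(A), \psi(A) \circ \Phi(a \circ F) \circ \phi)$ by (\ref{2015.07.11.eq6}) and (\ref{2015.07.11.eq8}); the right side is $H(a)^*(H(B)) = (H(A), H(a) \circ G)$ by applying (\ref{2015.07.11.eq6}) in $CC0({\cal C}', p')$. Expanding $H(a)$ via (\ref{2015.07.11.eq5}) and cancelling $\psi(ft(B))^{-1} \circ \psi(ft(B))$ gives $H(a) \circ G = \psi(A) \circ \Phi(a \circ F) \circ \phi$, so the second coordinates agree.

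For (\ref{2015.07.11.eq2}): by (\ref{2015.07.11.eq7}) and (\ref{2015.07.11.eq5}), the LHS equals $\psi(a^*(B)) \circ \Phi(Q(a, F)) \circ \psi(B)^{-1}$, while the RHS is $Q(H(a), G)$, which is characterized by the two equations $(-) \circ Q(G) = Q(H(a) \circ G)$ and $(-) \circ p_{int'(H(ft(B))), G} = p_{int'(H(A)), H(a) \circ G} \circ H(a)$. To verify the first, I rewrite $Q(G) = \psi(B) \circ \Phi(Q(F)) \circ \wt{\phi}$ using (\ref{2015.07.11.eq9}) applied at $(ft(B), F)$, cancel $\psi(B)^{-1} \circ \psi(B)$, apply (\ref{2015.07.11.eq10}) to collapse $Q(a, F) \circ Q(F) = Q(a \circ F)$, and then use (\ref{2015.07.11.eq9}) at $(A, a \circ F)$ to identify the result with $Q(H(a) \circ G)$. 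To verify the second, I use (\ref{2015.07.11.eq15}) at $(ft(B), F)$ to rewrite $\psi(B)^{-1} \circ p_{int'(H(ft(B))), G} = \Phi(p_{int(ft(B)), F}) \circ \psi(ft(B))^{-1}$, apply the projection equation of $Q(a, F)$ (namely $Q(a, F) \circ p_{int(ft(B)), F} = p_{int(A), a \circ F} \circ a$), and finally use (\ref{2015.07.11.eq15}) at $(A, a \circ F)$ to produce the required factor $p_{int'(H(A)), H(a) \circ G} \circ H(a)$.

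The main obstacle is purely bookkeeping: tracking the three layers (objects of the C-system, objects of ${\cal C}'$, and the isomorphisms $\psi$ mediating between $\Phi(int(-))$ and $int'(H(-))$) and distinguishing the two uses of each of (\ref{2015.07.11.eq15}) and (\ref{2015.07.11.eq9})---once at the level of $B$, once at the level of $a^*(B)$. No genuinely new idea is needed beyond the pull-back characterization of $Q(-, -)$ from Lemma \ref{2015.04.06.l0} together with the defining compatibilities (\ref{2015.07.11.eq9}) and (\ref{2015.07.11.eq15}) of the isomorphisms $\psi$.
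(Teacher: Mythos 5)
Your proposal is correct and follows essentially the same route as the paper: part (\ref{2015.07.11.eq1b}) by unfolding (\ref{2015.07.11.eq6}), (\ref{2015.07.11.eq8}) and (\ref{2015.07.11.eq5}), and part (\ref{2015.07.11.eq2}) by reducing to two projection equations against a pull-back over $U'$ and verifying them with (\ref{2015.07.11.eq10}), (\ref{2015.07.11.eq9}), (\ref{2015.07.11.eq15}) and the commutativity of the canonical squares. The only (cosmetic) difference is that you project onto $Q(G)$ and $p_{int'(H(ft(B))),G}$, i.e.\ use the defining characterization of $Q(H(a),G)$ directly, whereas the paper first composes both sides with $\psi(B)$ and projects onto $\Phi(Q(F))\circ\wt{\phi}$ and $\Phi(p_B)$ --- the same computation conjugated by the isomorphism $\psi(B)$.
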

\begin{proof}
We have
$$H(a^*(ft(B),F))=H(A,a\circ F)=(H(A),\psi(A)\circ\Phi(a\circ F)\circ\phi)$$
and
$$H(a)^*(H(ft(B),F))=H(a)^*(H(ft(B)),\psi(ft(B))\circ\Phi(F)\circ\phi)=$$$$(H(A),H(a)\circ \psi(ft(B))\circ\Phi(F)\circ\phi)$$
Therefore we need to check that
$$\psi(A)\circ \Phi(a)=H(a)\circ \psi(ft(B))$$
which follows from the definition of $H(a)$. 

To prove (\ref{2015.07.11.eq2}) it is sufficient, since $\psi(B)$ is an isomorphism, to show that
$$H(q(a,B))\circ \psi(B)=q(H(a),H(B))\circ \psi(B)$$
In view of (\ref{2015.07.11.eq1b}) both sides are morphisms from $int(H(a^*(B)))$ to 
$$\Phi(int(B))=\Phi((int(ft(B));F))$$
Since the two right squares of (\ref{2009.10.26.eq2}) for $(ft(B),F)$ are pull-back, $\Phi((int(ft(B));F))$ is a fiber product with projections $\Phi(Q(F))\circ\wt{\phi}$ and $\Phi(p_{B})$. Therefore it is sufficient to check two equalities
\begin{eq}\llabel{2015.07.11.eq3}
H(q(a,B))\circ \psi(B)\circ \Phi(Q(F))\circ\wt{\phi}=q(H(a),H(B))\circ \psi(B)\circ \Phi(Q(F))\circ\wt{\phi}
\end{eq}
and
\begin{eq}\llabel{2015.07.11.eq4}
H(q(a,B))\circ \psi(B)\circ \Phi(p_{B})=q(H(a),H(B))\circ \psi(B)\circ \Phi(p_{B})
\end{eq}
Note first that
\begin{eq}\llabel{2015.07.11.eq12}
H(q(a,B))\circ \psi(B)=\psi(a^*(B))\circ \Phi(q(a,(ft(B),F)))=\psi(A,a\circ F)\circ \Phi(Q(a,F))
\end{eq}
where the first equality is by (\ref{2015.07.11.eq5}) and the second by (\ref{2015.07.11.eq7}), and
$$q(H(a),H(B))\circ \psi(B)=q(H(a), (H(ft(B)),\psi(ft(B))\circ\Phi(F)\circ\phi))\circ \psi(B)=$$
\begin{eq}
\llabel{2015.07.11.eq13}
Q(H(a),\psi(ft(B))\circ\Phi(F)\circ\phi)\circ \psi(ft(B),F)
\end{eq}
where is first equality is by (\ref{2015.07.11.eq8}) and the second by (\ref{2015.07.11.eq7}).

For (\ref{2015.07.11.eq3}) we have
$$H(q(a,B))\circ \psi(B)\circ \Phi(Q(F))\circ\wt{\phi}=\psi(A,a\circ F)\circ \Phi(Q(a,F))\circ \Phi(Q(F))\circ \wt{\phi}=$$$$\psi(A,a\circ F)\circ \Phi(Q(a\circ F))\circ \wt{\phi}=Q(\psi(A)\circ \Phi(a\circ F)\circ \phi)$$
where the first equality is by (\ref{2015.07.11.eq12}), second equality is by (\ref{2015.07.11.eq10}) and the third one by (\ref{2015.07.11.eq9}), and
$$q(H(a),H(B))\circ \psi(B)\circ \Phi(Q(F))\circ\wt{\phi}=$$$$Q(H(a),\psi(ft(B))\circ\Phi(F)\circ\phi)\circ \psi(ft(B),F)\circ \Phi(Q(F))\circ\wt{\phi}=$$$$Q(H(a),\psi(ft(B))\circ\Phi(F)\circ\phi)\circ Q(\psi(ft(B))\circ \Phi(F)\circ \phi)=$$$$Q(H(a)\circ \psi(ft(B))\circ\Phi(F)\circ\phi)=$$$$Q(\psi(A)\circ \Phi(a)\circ \psi(ft(B))^{-1}\circ \psi(ft(B))\circ\Phi(F)\circ\phi)=Q(\psi(A)\circ \Phi(a\circ F)\circ \phi)$$
where the first equality is by (\ref{2015.07.11.eq13}), the second equality is by (\ref{2015.07.11.eq9}), the third one by (\ref{2015.07.11.eq10}), and the fourth one by (\ref{2015.07.11.eq5}).
For (\ref{2015.07.11.eq3}) we have:
$$H(q(a,B))\circ \psi(B)\circ \Phi(p_{B})=\psi(A,a\circ F)\circ \Phi(Q(a,F))\circ \Phi(p_{B})=\psi(a^*(B))\circ \Phi(Q(a,F)\circ p_{B})=$$$$\psi(a^*(B))\circ \Phi(q(a,B)\circ p_{B})=\psi(a^*(B))\circ \Phi(p_{a^*(B)})\circ \Phi(a)$$
where the first equality is by (\ref{2015.07.11.eq12}), the second by (\ref{2015.07.11.eq6}) and the assumption that $\Phi$ is a functor, the third one by  (\ref{2015.07.11.eq7}) and the fourth one by the commutativity of the canonical squares and the assumption that $\Phi$ is a functor. 

For the other side we have:
$$q(H(a),H(B))\circ \psi(B)\circ \Phi(p_{B})=q(H(a),H(B))\circ p_{H(B)}\circ \psi(ft(B))=$$$$p_{H(a^*(B))}\circ H(a)\circ \psi(ft(B))=p_{H(a^*(B))}\circ \psi(A)\circ \Phi(a)=\psi(a^*(B))\circ \Phi(p_{a^*(B)})\circ \Phi(a)$$
Where the first equality is by (\ref{2015.07.11.eq15}), the second by the commutativity of the canonical squares, the third by (\ref{2015.07.11.eq5}) and the fourth again by (\ref{2015.07.11.eq15}). This completes the proof of Lemma \ref{2015.07.11.l4}.
\end{proof}
\begin{problem}
\llabel{2014.09.18.prob2}
Let 
$$(\Phi,\phi,\wt{\phi}):({\cal C},p,pt)\sr ({\cal C}',p',pt')$$
be a functor of universes categories. To define a homomorphism $H=H(\Phi,\phi,\wt{\phi})$ from $CC({\cal C},p)$ to $CC({\cal C}',p')$. 
\end{problem}
\begin{construction}
\llabel{2014.09.18.constr2}\rm
We define $H_{Ob}$ as the sum of functions $H_n$ constructed above and for 
$$(\Gamma,(\Gamma',a))\in Mor(CC({\cal C},p))$$
we set
$$H_{Mor}(\Gamma,(\Gamma',a))=(H_{Ob}(\Gamma),(H_{Ob}(\Gamma'),H(a)))$$
where $H(a)$ was constructed above. 

The fact that $H_{Ob}$ commutes with the length functions is immediate from the construction. The fact that it commutes with the $ft$ functions follows from Lemma \ref{2015.07.11.l1}, the fact that $H_{Ob}$ and $H_{Mor}$ form a functor follows from Lemma \ref{2015.07.11.l2}. The fact that $H_{Mor}$ satisfies the $p$-condition follows from Lemma \ref{2015.07.11.l3}. The fact that $H_{Mor}$ satisfies the $q$-condition follows from Lemma \ref{2015.07.11.l4}. 

Applying Lemma \ref{2015.06.29.l1} we conclude that $H=(H_{Ob},H_{Mor})$ is a homomorphism of C-systems.
\end{construction}
\begin{lemma}
\llabel{2014.09.18.l1}
Let $(\Phi,\phi,\wt{\phi})$ be as in Problem \ref{2014.09.18.prob2} and let $H$ be the corresponding solution of Construction \ref{2014.09.18.constr2}. Then one has:
\begin{enumerate}
\item If $\Phi$ is a faithful functor and $\phi$ is a monomorphism then $H$ is an injection of C-systems. 
\item If $\Phi$ is a fully faithful functor and $\phi$ is an isomorphism then $H$ is an isomorphism. 
\end{enumerate}
\end{lemma}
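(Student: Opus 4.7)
The plan is to reduce the statement to the underlying set-level functions: a homomorphism of C-systems is an injection (resp.\ an isomorphism) exactly when $H_{Ob}$ and $H_{Mor}$ are both injective (resp.\ bijective), since the structural conditions listed in Definition \ref{2015.06.29.def1} have already been verified in Construction \ref{2014.09.18.constr2}. Because $H_{Ob}$ is the disjoint sum of the functions $H_n$ and preserves length, its injectivity/bijectivity reduces to that of each $H_n$; likewise $H_{Mor}$ decomposes over pairs $(\Gamma,\Gamma')$ into the maps $a \mapsto H(a)$ on $Hom_{\cal C}(int(\Gamma),int(\Gamma'))$, so once we know $H_{Ob}$ is injective/bijective it suffices to analyse these hom-set maps.

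For part (1), I would prove by induction on $n$ that each $H_n$ is injective. The base $H_0$ is trivial. For the step, an equality $H_{n+1}(A,F) = H_{n+1}(A',F')$ unpacks via (\ref{2015.07.11.eq8}) into $H_n(A) = H_n(A')$ together with $\psi_n(A) \circ \Phi(F) \circ \phi = \psi_n(A') \circ \Phi(F') \circ \phi$. The first equation yields $A = A'$ by induction, so $\psi_n(A) = \psi_n(A')$; cancelling this isomorphism on the left and then using that $\phi$ is a monomorphism leaves $\Phi(F) = \Phi(F')$, whence $F = F'$ by faithfulness of $\Phi$. The hom-set statement is even shorter: by (\ref{2015.07.11.eq5}) the equation $H(a) = H(a')$ becomes $\Phi(a) = \Phi(a')$ after conjugating by the isomorphisms $\psi$, so $a = a'$ by faithfulness.

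For part (2) the same induction is run in the other direction. Given $(B,G) \in Ob'_{n+1}$ with $G : int'(B) \to U'$, the inductive hypothesis supplies $A \in Ob_n$ with $H_n(A) = B$; the required $F$ must satisfy $\psi_n(A) \circ \Phi(F) \circ \phi = G$ by (\ref{2015.07.11.eq8}), and since $\phi$ and $\psi_n(A)$ are isomorphisms this is equivalent to $\Phi(F) = \psi_n(A)^{-1} \circ G \circ \phi^{-1}$; fullness of $\Phi$ then produces such an $F$, and faithfulness makes it (together with the $A$ supplied above) unique. Surjectivity of $H_{Mor}$ on hom-sets follows identically: every $b : int'(H(\Gamma)) \to int'(H(\Gamma'))$ is the image under $\Phi$ of some $a$ with $\Phi(a) = \psi(\Gamma)^{-1} \circ b \circ \psi(\Gamma')$, and then $H(a) = b$ by (\ref{2015.07.11.eq5}).

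The step I expect to demand the most care is not any single calculation but the bookkeeping that makes the reduction in the first paragraph rigorous, in particular the clarification that ``isomorphism of C-systems'' in part (2) means exactly that the pair of underlying functions is bijective, so that the $p$-, $q$-, $s$- and functoriality conditions established in Construction \ref{2014.09.18.constr2} automatically transfer to the set-theoretic inverse and no separate verification is required. Once this framework is in place, every remaining step is a short cancellation in the defining formulas (\ref{2015.07.11.eq8}) and (\ref{2015.07.11.eq5}) using that $\psi_n(A)$ is an isomorphism together with the stated hypotheses on $\Phi$ and $\phi$.
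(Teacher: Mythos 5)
Your proposal is correct and follows essentially the same route as the paper, whose proof is just the one-line remark that the object statements go by induction on length and the morphism statements then follow from full faithfulness of $int$ (equivalently, from the formula (\ref{2015.07.11.eq5})); you have simply written out the cancellation steps and the bookkeeping that the paper leaves implicit. The only point you add beyond the paper is the explicit observation that a bijective homomorphism of C-systems has an inverse that is again a homomorphism, which is a standard transfer of equational conditions and is correctly handled.
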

\begin{proof}
Both statements in relation to objects have straightforward proofs by induction on the length. In relation to morphisms the statements follow from the ones about the objects and the fact that $int$ is fully faithful. 
\end{proof}
Lemma \ref{2014.09.18.l1} can be further specialized into the following example.
\begin{example}\rm
\llabel{2015.07.11.ex1}
Let $\cal C$ be a category and $p:\wt{U}\sr U$ a morphism in $\cal C$. Let now $(p_{X,F},Q(F))$ and $(p'_{X,F},Q'(F))$ be two universe structures on $p$ and $pt$ and $pt'$ be two final objects in $\cal C$. These data gives us two universe categories. Let us denote them by ${\cal UC}$ and ${\cal UC}'$. The identity functor $\Phi=Id_{\cal C}$ on $\cal C$ together with the identity morphisms $\phi=Id_{U}$ and $\wt{\phi}=Id_{\wt{U}}$ define a universe category functor ${\bf \Phi}:{\cal UC}\sr {\cal UC}'$. The corresponding homomorphism of C-systems $H_{\bf\Phi}:CC({\cal UC})\sr CC({\cal UC}')$ is an isomorphism with the inverse isomorphism given by the same triple considered as a universe functor from ${\cal UC}$ to ${\cal UC}'$. This example shows that, up to a ``canonical'' isomorphism, the C-system defined by a universe category depends only on the category $\cal C$ and the morphism $p$.
\end{example}

\begin{problem}
\llabel{2015.07.17.prob1}
Let $({\cal C},p)$ be a universe category. Let $CC$ be a C-system. Given the following collection of data:
\begin{enumerate}
\item A functor $I:CC\sr {\cal C}$ from the underlying category of $CC$ to $\cal C$,
\item For each $\Gamma\in CC$ a function
$$u_{\Gamma}:Ob_1(\Gamma)\sr Hom_{\cal C}(I(\Gamma),U)$$
\item For each $\Gamma\in CC$, $\Delta\in Ob_1(\Gamma)$ an isomorphism
$$\gamma_{\Delta}:(I(\Gamma);u(\Delta))\sr I(\Delta)$$
\end{enumerate}
such that
\begin{enumerate}
\item the morphism $\pi_{I(pt)}:I(pt)\sr pt$ is an isomorphism 
\item for each $f:\Gamma'\sr \Gamma$ and $\Delta\in Ob_1(\Gamma)$ one has $u_{\Gamma'}(f^*(\Delta))=I(f)\circ u_{\Gamma}(\Delta)$,
\item for each $f:\Gamma'\sr \Gamma$ and $\Delta\in Ob_1(\Gamma)$ one has $p_{I(\Gamma),u(\Delta)}=\gamma_{\Delta}\circ I(p_{\Delta})$,
\item for each $f:\Gamma'\sr \Gamma$ and $\Delta\in Ob_1(\Gamma)$ one has $\gamma_{f^*(\Delta)}\circ I(q(f,\Delta))=Q(I(f),u(\Delta))\circ \gamma_{\Delta}$
\end{enumerate}
to construct a C-system homomorphism 
$$H(I,u,\gamma):CC\sr CC({\cal C},p)$$
\end{problem}
In what follows we will often write $u$ instead of $u_{\Gamma}$. 
\begin{construction}\rm
\llabel{2015.07.17.constr1}
First we construct by induction on $n$ pairs $(H_n, \psi_n)$ where 
$$H_n:Ob_n(CC)\sr Ob_n({\cal C},p)$$
is a function and $\psi_n$ is a family of isomorphisms of the form
$$\psi_n(\Gamma):int(H(\Gamma))\sr I(\Gamma)$$
given for all $\Gamma\in Ob_n(CC)$ as follows (we will sometimes write $\psi$ instead of $\psi_n$ and $H$ instead of $H_n$):
\begin{enumerate}
\item For $n=0$ we set
$$H(pt)=pt$$
$$\psi(pt)=(\pi_{I(pt)})^{-1}:pt\sr I(pt)$$
\item For the successor of $n$, $\Gamma$ such that $H_n(\Gamma)=B$ and $\Delta\in Ob_1(\Gamma)$ we set
\begin{eq}\llabel{2015.07.19.eq4}
H_{n+1}(\Delta)=(B,\psi(Gamma)\circ u(\Delta))
\end{eq}
and
\begin{eq}\llabel{2015.07.19.eq5}
\psi(\Delta)=Q(\psi(\Gamma),u(\Delta))\circ \gamma_{\Delta}
\end{eq}
The fact that $\psi(\Delta)$ is an isomorphism follows from the inductive assumption, the assumption that $\gamma_{\Delta}$ is an isomorphism and Lemma \ref{2015.07.19.l1}.
\end{enumerate}
The functions $H_n$ define a function 
$$H_{Ob}:Ob(CC)\sr Ob(CC({\cal C},p))$$
where $H_{Ob}(\Gamma)=(l(\Gamma), H_l(\Gamma))$ that commutes with the length functions and functions $ft$.

For $f:\Gamma'\sr \Gamma$ define 
$$H_{Mor}(f)=(H_{Ob}(\Gamma'),(H_{Ob}(\Gamma),\psi(\Gamma')\circ I(f)\circ \psi(\Gamma)^{-1}))$$
This gives us a function 
$$H_{Mor}:Mor(CC)\sr Mor(CC({\cal C},p))$$
Note that we can also define $H_{Mor}(f)$ as the unique morphism such that
\begin{eq}\llabel{2015.07.19.eq1}
int(H_{Mor}(f))=\psi(\Gamma')\circ I(f)\circ \psi(\Gamma)^{-1}
\end{eq}
Without using any more assumptions on $I$, $\gamma$ and $u$ one verifies easily that the pair $H=(H_{Ob},H_{Mor})$ is a functor from the underlying category of $CC$ to the underlying category of $CC({\cal C},p)$.

In view of Lemma \ref{2015.06.29.l1} it remains to verify that $H$ satisfies the $p$-morphism and the $q$-morphism conditions of Definition \ref{2015.06.29.def1}. 

For the $p$-condition we need to verify that $H(p_{\Gamma})=p_{H(\Gamma)}$ for all $\Gamma$. Since both sides have the same domain and codomain and $int$ is bijective on morphisms with the a given domain and codomain it is sufficient to verify that
$$int(H(p_{\Gamma}))=int(p_{H(\Gamma)})$$
We proceed by induction on $n=l(\Gamma)$:
\begin{enumerate}
\item for $n=0$
$$int(H(Id_{pt}))=int(Id_{pt})=int(p_{pt})=int(p_{H(pt)})$$
\item for the successor of $n$ let $\Delta\in Ob_{n+1}(CC)$ and $\Gamma=ft(\Delta)$.  Then $\Delta\in Ob_1(\Gamma)$ and 
$$int(H(p_{\Delta}))=\psi(\Delta)\circ I(p_{\Delta})\circ \psi(\Gamma)^{-1}=Q(\psi(\Gamma),u(\Delta))\circ \gamma_{\Delta}\circ I(p_{\Delta})\circ \psi(\Gamma)^{-1}=$$$$Q(\psi(\Gamma),u(\Delta))\circ p_{I(\Gamma),u(\Delta)}\circ \psi(\Gamma)^{-1}$$
where the first equality is by (\ref{2015.07.19.eq1}), the second one by (\ref{2015.07.19.eq5}) and the third one by condition (3) of the problem. On the other hand we have
$$int(p_{H(\Delta)})=int(p_{(n+1,(H_n(\Gamma),\psi(\Gamma)\circ u(\Delta)))})=p_{int(H(\Gamma)),\psi(\Gamma)\circ u(\Delta)}$$
where the first equality is by (\ref{2015.07.19.eq4}) and the second by (\ref{2015.07.19.eq7}). 
Composing with $\psi(\Gamma)$ we get
$$int(H(p_{\Delta}))\circ \psi(\Gamma)=Q(\psi(\Gamma),u(\Delta))\circ p_{I(\Gamma),u(\Delta)}$$
$$int(p_{H(\Delta)})\circ \psi(\Gamma)=p_{int(H(\Gamma)),\psi(\Gamma)\circ u(\Delta)}\circ \psi(\Gamma)$$
and these expressions are equal by commutativity of the squares (\ref{2015.07.11.eq1}). 
\end{enumerate}
To prove the $q$-condition let us verify first that for $f:\Gamma'\sr \Gamma$ and $\Delta\in Ob_1(\Gamma)$ one has
\begin{eq}\llabel{2015.07.19.eq8}
H(f^*(\Delta))=H(f)^*(H(\Delta))
\end{eq}
Let $H(\Gamma')=(m,A)$ and $H(\Gamma)=(n,B)$. Then 
$$H(f^*(\Delta))=(m+1,(A,\psi(\Gamma')\circ u(f^*(\Delta))))$$
by (\ref{2015.07.19.eq4}) and 
$$H(f)^*(H(\Delta))=H(f)^*(n+1,(B,\psi(\Gamma)\circ u(\Delta)))=(m+1,(A,int(H(f))\circ\psi(\Gamma)\circ u(\Delta)))$$
where the first equality holds by (\ref{2015.07.19.eq4}) and the second by (\ref{2015.07.19.eq2}). Next one has 
$$\psi(\Gamma')\circ u(f^*(\Delta))=\psi(\Gamma')\circ I(f)\circ u(\Delta)$$
by condition (2) of the problem and 
$$int(H(f))\circ\psi(\Gamma)\circ u(\Delta)=\psi(\Gamma')\circ I(f)\circ \psi(\Gamma)^{-1}\circ \psi(\Gamma)\circ u(\Delta)=\psi(\Gamma')\circ I(f)\circ u(\Delta)$$
by (\ref{2015.07.19.eq1}). 

The equality (\ref{2015.07.19.eq8}) implies that the morphisms $H(q(f,\Delta))$ and $q(H(f),H(\Delta))$ have the same domain and codomain. Therefore to prove that they are equal it is sufficient to prove that they become equal after application of $int$. We further compose both sides with $\psi(\Delta)$. Then we have
$$int(H(q(f,\Delta)))\circ \psi(\Delta)=\psi(f^*(\Delta))\circ I(q(f,\Delta))=Q(\psi(\Gamma'),I(f)\circ u(\Delta))\circ \gamma_{f^*(\Delta)}\circ I(q(f,\Delta))=$$$$Q(\psi(\Gamma'),I(f)\circ u(\Delta))\circ Q(I(f),u(\Delta))\circ \gamma_{\Delta}=Q(\psi(\Gamma')\circ I(f), u(\Delta))\circ \gamma_{\Delta}$$
where the first equality is by (\ref{2015.07.19.eq1}), the second by (\ref{2015.07.19.eq5}), the third by condition (4) of the problem and the fourth by Lemma \ref{2015.04.14.l0}. On the other hand
$$int(q(H(f),H(\Delta)))\circ \psi(\Delta)=int(q(H(f),(n+1,(B,\psi(\Gamma)\circ u(\Delta))))) \circ \psi(\Delta)=$$$$Q(int(H(f)),\psi(\Gamma)\circ u(\Delta))\circ \psi(\Delta)=
Q(int(H(f)),\psi(\Gamma)\circ u(\Delta))\circ Q(\psi(\Gamma'),u(\Delta))\circ \gamma_{\Delta}=$$$$Q(int(H(f))\circ \psi(\Gamma), u(\Delta))\circ \gamma_{\Delta}=Q(\psi(\Gamma')\circ I(f), u(\Delta))\circ \gamma_{\Delta}$$
where the first equality is by (\ref{2015.07.19.eq4}), the second by (\ref{2015.07.19.eq3}), the third by (\ref{2015.07.19.eq5}), the fourth by Lemma \ref{2015.04.14.l0} and the fifth by  (\ref{2015.07.19.eq1}). This completes Construction \ref{2015.07.17.constr1}.
\end{construction}
\begin{remark}\rm
Homomorphisms $H(\Phi,\phi,\wt{\phi})$ can be obtained as particular cases of homomorphisms $H(I,u,\gamma)$. More precisely, we can state without a proof that
$$H(\Phi,\phi,\wt{\phi})=H(I,u,\gamma)$$
where:
\begin{enumerate}
\item $I(\Gamma)=\Phi(int(\Gamma))$ and $I(f)=\Phi(int(f))$,
\item for $\Gamma=(n,B)$ and $\Delta=(n+1,(B,F))$, 
$$u_{\Gamma}(\Delta)=\Phi(F)\circ \phi$$
\item for $\Gamma=(n,B)$ and $\Delta=(n+1,(B,F))$, $\gamma_{\Delta}$ is the ``natural'' isomorphism from $(\Phi(int(B));\Phi(F)\circ \phi)$ to $\Phi(int(B);F)$. More precisely 
$$\gamma_{\Delta}=((p_{\Phi(int(B)),\Phi(F)\circ \phi})*(Q(\Phi(F)\circ \wt{\phi})))^{-1}$$
\end{enumerate}
\end{remark}
\begin{lemma}
\llabel{2015.07.19.l2}
Let $I$, $u$ and $\gamma$ be as in Problem \ref{2015.07.17.prob1} and let $H$ be the corresponding homomorphism of Construction \ref{2015.07.17.constr1}. Then one has:
\begin{enumerate}
\item If $I$ is a faithful functor and $u_{\Gamma}$ are injective then $H$ is an injection of C-systems.
\item If $I$ is a fully faithful functor and $u_{\Gamma}$ are bijective then $H$ is an isomorphism of C-systems.
\end{enumerate}
\end{lemma}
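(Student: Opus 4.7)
The plan is to mirror the strategy indicated for Lemma \ref{2014.09.18.l1}: first handle the object-level statements by induction on length, then deduce the morphism-level statements using formula (\ref{2015.07.19.eq1}) together with the faithfulness or fullness of $I$. Since $H_{Ob}(\Gamma)=(l(\Gamma),H_{l(\Gamma)}(\Gamma))$, the object claim reduces to showing that each $H_n:Ob_n(CC)\sr Ob_n({\cal C},p)$ is injective in case (1) and bijective in case (2).

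For $n=0$ both sides are one-point sets, so the base case is trivial. For the inductive step, formula (\ref{2015.07.19.eq4}) reads $H_{n+1}(\Delta)=(H_n(ft(\Delta)),\,\psi(ft(\Delta))\circ u(\Delta))$. For injectivity, if $H_{n+1}(\Delta_1)=H_{n+1}(\Delta_2)$ then comparing first coordinates and invoking the inductive hypothesis yields $ft(\Delta_1)=ft(\Delta_2)=:\Gamma$, after which one cancels the isomorphism $\psi(\Gamma)$ and applies injectivity of $u_\Gamma$. For surjectivity, given $(B,F)\in Ob_{n+1}({\cal C},p)$, inductive surjectivity of $H_n$ produces $\Gamma$ with $H_n(\Gamma)=B$, and surjectivity of $u_\Gamma$ produces $\Delta\in Ob_1(\Gamma)$ with $u_\Gamma(\Delta)=\psi(\Gamma)^{-1}\circ F$; this $\Delta$ then satisfies $H_{n+1}(\Delta)=(B,F)$.

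For morphisms, formula (\ref{2015.07.19.eq1}) shows that $H_{Mor}(f)$ for $f:\Gamma'\sr \Gamma$ is completely determined by the pair $(H(\Gamma'),H(\Gamma))$ together with the single morphism $\psi(\Gamma')\circ I(f)\circ \psi(\Gamma)^{-1}$ of $\cal C$. Since the $\psi$'s are isomorphisms, the assignment $f\mapsto \psi(\Gamma')\circ I(f)\circ \psi(\Gamma)^{-1}$ is, on a fixed hom-set $Hom_{CC}(\Gamma',\Gamma)$, a bijection onto its image, so the injectivity (respectively bijectivity) of $H_{Mor}$ on this hom-set reduces to the faithfulness (respectively full-faithfulness) of $I$ on the same hom-set. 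Combined with injectivity (respectively bijectivity) on objects, and with the fact that equal images under $H_{Mor}$ force equal domains and codomains via $H_{Ob}$, this gives the desired global conclusion for $H_{Mor}$.

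I do not expect a hard step: the argument is bookkeeping once one unpacks (\ref{2015.07.19.eq4}) and (\ref{2015.07.19.eq1}). The only subtlety worth naming is that for case (2) one should note explicitly that ``isomorphism of C-systems'' follows from bijectivity of the two component functions $H_{Ob}$ and $H_{Mor}$: the inverse pair inherits the length, $ft$, identity/composition, $p$-, $q$- and $s$-conditions automatically, being the pointwise inverse of a pair that satisfies them.
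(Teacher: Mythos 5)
Your proposal is correct and follows exactly the route the paper indicates: induction on length for the object-level claims, then reduction of the morphism-level claims to the object-level ones via formula (\ref{2015.07.19.eq1}), the invertibility of the $\psi$'s, and the (full) faithfulness of $I$ and of $int$. The paper's own proof is only a two-sentence sketch of this same argument, so your write-up simply supplies the details it leaves implicit.
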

\begin{proof}
Both statements in relation to objects have straightforward proofs by induction on the length. In relation to morphisms the statements follow from the ones about the objects, the fact that $int$ is fully faithful and formula (\ref{2015.07.19.eq1}).
\end{proof}

\subsection{Every C-system is isomorphic to a C-system of the form $CC({\cal C},p)$}

\begin{problem}
\llabel{2014.09.18.prob3}
Let $CC$ be a C-system. Construct a universe category $({\cal C},p)$ and an isomorphism $CC\cong CC({\cal C},p)$.
\end{problem}

We will provide three different constructions for this problem - Constructions \ref{2014.09.18.constr3}, \ref{2015.07.21.constr1} and \ref{2015.07.25.constr1} with the two latter constructions using the first one.

It is customary in the modern mathematics to use ``the'' category of sets $Sets$. In fact, every set $X$ in the Zermelo-Fraenkel theory defines a category $S(X)$ where:
$$Ob(S(X))=X$$
$$Mor(S(X))=\amalg_{x_1,x_2\in X}Fun(x_1,x_2)$$
where $Fun(x_1,x_2)$ is the set of functions from $x_1$ to $x_2$. This definition makes sense since elements of Zermelo-Fraenkel sets are themselves Zermelo-Fraenkel sets. 

Taking $X$ to be sets satisfying particular conditions, e.g. Grothendieck universes, one obtains categories that can be equipped with various familiar structures such as fiber products, internal $Hom$-objects etc. When one says consider ``the'' category of sets one presumably means the category $S(GU)$ for a chosen Grothendieck universe $GU$.

The first construction that we provide assumes that we are working in set theory with a chosen Grothendieck universe (or in type theory with a chosen type theoretic universe) that contains the sets of objects and morphisms of our C-system. In the case of a type theory we will actually need two universes in order to have a type of which the first universe is an object. 

In what follows we use the notations
$$\wt{Ob}(CC)=\{s\in Mor(CC)\,|\,s:ft(X)\sr X, l(X)>0, s\circ p_X=Id_{X}\}$$
and $\partial:\wt{Ob}(CC)\sr Ob(CC)$, $\partial(s)=codom(s)$ that were introduced in \cite{Csubsystems}. We may sometimes abbreviate $Ob(CC)$ to $CC$. 

We will write $\wt{Ob}_1(\Gamma)$ for the subset of $\wt{Ob}$ that consists of $s$ such that $ft(\partial(s))=\Gamma$. For $f:\Gamma'\sr \Gamma$ we have the function $s\mapsto f^*(s,1)$ (see \cite{Csubsystems}) that maps $s$ to the element of $\wt{Ob}_1(\Gamma)$ that is the pull-back of the section $s$ relative to $f$. We will denote this function by $f^*$. It is easy to verify from the definitions that 
\begin{eq}\llabel{2015.07.19.eq10}
(Id_{\Gamma})^*(s)=s
\end{eq}
and for $g:\Gamma''\sr \Gamma'$, $f:\Gamma'\sr \Gamma$ and $s\in \wt{Ob}_1(\Gamma)$ one has 
\begin{eq}\llabel{2015.07.19.eq11a}
g^*(f^*(s))=(g\circ f)^*(s).
\end{eq}
i.e., that the maps $f^*$ define on the family of sets $\wt{Ob}_1$ the structure of a presheaf. We continue using the notation $\wt{Ob}_1$ for this presheaf. 
\begin{construction}
\llabel{2014.09.18.constr3}\rm
Denote by $PreShv(CC)$ the category of presheaves on the precategory underlying $CC$, i.e., the category of contravariant functors from the precategory underlying $CC$ to $Sets$. 

Let $Ob_1$ be the presheaf that takes an object $\Gamma\in CC$ to the set $Ob_1(\Gamma)$
and a morphism $f:\Gamma'\sr \Gamma$ to the map $\Delta\mapsto f^*(\Delta)$. It is a functor due to the composition and unity axioms for $f^*$. 

Let $\wt{Ob}_1$ be the presheaf that takes $\Gamma$ to $\wt{Ob}_1(\Gamma)$ described above. 

Let further $\partial:\wt{Ob}_1\sr Ob_1$ be the morphism that takes $s$ to $\partial(s)$. It is well defined as a morphisms of families of sets and forms a morphism of presheaves since $\partial(f^*(s))=f^*(\partial(s))$. 

The morphism $\partial$ carries a universe structure that is defined by the standard pull-back squares in the category of presheaves. 

We are going to construct a homomorphism $CC\sr CC(PreShv(CC),\partial)$ using Construction \ref{2015.07.17.constr1} and to show that it is an isomorphism using Lemma \ref{2015.07.19.l2}. 

We set $Yo$ to be the Yoneda embedding. 

We set 
$$v_{\Gamma}:Ob_1(\Gamma)\sr Hom_{PreShv}(Yo(\Gamma),Ob_1)$$
to be the standard bijections between sections of the presheaf $Ob_1$ on an object $\Gamma$ and morphisms from the corresponding representable presheaf $Yo(\Gamma)$ to $Ob_1$ in the category of presheaves.  It follows easily from the definitions that for $f:\Gamma'\sr \Gamma$ and $\Delta\in Ob_1(\Gamma)$ one has
\begin{eq}\llabel{2015.07.19.eq17}
v_{\Gamma'}(f^*(\Delta))=Yo(f)\circ v_{\Gamma}(\Delta)
\end{eq}
We also set
$$\wt{v}_{\Gamma}:\wt{Ob}_1(\Gamma)\sr Hom_{PreShv}(Yo(\Gamma),\wt{Ob}_1)$$
to be the bijections of the same form for $\wt{Ob}_1$. Again, it follows easily from the definitions that for $f:\Gamma'\sr \Gamma$ and $s\in \wt{Ob}_1(\Gamma)$ one has
\begin{eq}\llabel{2015.07.19.eq16}
\wt{v}_{\Gamma'}(f^*(s))=Yo(f)\circ \wt{v}_{\Gamma}(s)
\end{eq}

To construct $\gamma$ we first need to prove a lemma. Recall that for $\Delta\in CC$ such that $l(\Delta)>0$ we let $\delta(\Delta): \Delta \sr p_{\Delta}^*(\Delta)$ denote the section of $p_{p_{\Delta}^*(\Delta)}$ given by the diagonal. We have 
$$\delta(\Delta)\in \wt{Ob}_1(\Delta)$$
\begin{lemma}
\llabel{2009.12.28.l1}
Let $\Gamma\in Ob(CC)$ and $\Delta\in Ob_1(\Gamma)$. Then the square
\begin{eq}
\llabel{2009.12.28.eq2}
\begin{CD}
Yo(\Delta) @>\wt{v}(\delta(\Delta))>> \wt{Ob}_1\\
@VYo(p_{\Delta})VV @VV\partial V\\
Yo(\Gamma) @>v(\Delta)>> Ob_1
\end{CD}
\end{eq}
is a pull-back square.
\end{lemma}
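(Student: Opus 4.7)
The plan is to reduce the claim to a pointwise pullback of sets via Yoneda and then exhibit an explicit inverse. Since limits in presheaves are computed pointwise and the bijections $v$ and $\wt{v}$ identify the horizontal morphisms of the square with evaluation on Hom-sets, the assertion reduces, at each $\Xi\in Ob(CC)$, to showing that the square of sets
$$
\begin{CD}
Hom_{CC}(\Xi,\Delta) @>f\mapsto f^*(\delta(\Delta))>> \wt{Ob}_1(\Xi)\\
@Vf\mapsto f\circ p_\Delta VV @VV\partial V\\
Hom_{CC}(\Xi,\Gamma) @>g\mapsto g^*(\Delta)>> Ob_1(\Xi)
\end{CD}
$$
is a pull-back of sets. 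Commutativity is immediate from the naturality of $\partial$ and the C-system identity $(f\circ p_\Delta)^*(\Delta)=f^*(p_\Delta^*(\Delta))$, since $\partial(f^*(\delta(\Delta)))=f^*(\partial(\delta(\Delta)))=f^*(p_\Delta^*(\Delta))$.

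For the universal property, I would fix $g\colon\Xi\to\Gamma$ and $s\in\wt{Ob}_1(\Xi)$ with $\partial(s)=g^*(\Delta)$, so that $s$ is a section of $p_{g^*(\Delta)}$, and take as candidate lift $f:=s\circ q(g,\Delta)\colon\Xi\to\Delta$. The equation $f\circ p_\Delta=g$ is a two-line computation, $s\circ q(g,\Delta)\circ p_\Delta=s\circ p_{g^*(\Delta)}\circ g=g$, using the commutativity of the canonical square and the section property of $s$.

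The heart of the proof is the identity $f^*(\delta(\Delta))=s$. The key input I would invoke is the identification of the presheaf pull-back of the universal diagonal with the canonical section attached to a morphism, namely $f^*(\delta(\Delta))=s_f$, where $s_f\colon\Xi\to (f\circ p_\Delta)^*(\Delta)=g^*(\Delta)$ is the operation of \cite{Csubsystems} characterized by $s_f\circ p_{g^*(\Delta)}=Id_\Xi$ and $s_f\circ q(f\circ p_\Delta,\Delta)=f$. With this in hand, both $s$ and $s_f$ are sections of $p_{g^*(\Delta)}$ that agree after composition with $q(g,\Delta)$ — for $s$ by our very definition of $f$, for $s_f$ by construction — and since the canonical square for $g$ is a pull-back, any two sections of $p_{g^*(\Delta)}$ agreeing after composition with $q(g,\Delta)$ are equal; hence $s=s_f=f^*(\delta(\Delta))$. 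Uniqueness of $f$ then follows in one line: any $f'$ with $f'\circ p_\Delta=g$ and $(f')^*(\delta(\Delta))=s$ satisfies $f'=s_{f'}\circ q(g,\Delta)=(f')^*(\delta(\Delta))\circ q(g,\Delta)=s\circ q(g,\Delta)=f$.

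The main obstacle I foresee is pinning down the identity $f^*(\delta(\Delta))=s_f$, i.e.\ showing that the abstract presheaf pull-back on $\wt{Ob}_1$ coincides with the C-system operation $s_{(-)}$. I expect this to follow directly from the construction of the presheaf structure on $\wt{Ob}_1$ in \cite{Csubsystems} together with the characterization $\delta(\Delta)=s_{Id_\Delta}$ and the composition rule $(h\circ f)^*(s)=h^*(f^*(s))$, reducing the claim to the fact that a section is determined by its composition with the $q$-projection into the canonical pull-back square.
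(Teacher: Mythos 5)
Your proposal follows essentially the same route as the paper: reduce to a pointwise pull-back of sets, identify the horizontal maps via $g\mapsto s_g$ and $g\mapsto g\circ p_\Delta$, and produce the lift as $s\circ q(g,\Delta)$, with uniqueness coming from the axiom that a morphism is recovered as $s_f\circ q(ft(f),\Delta)$. The only (cosmetic) difference is that the paper verifies $f^*(\delta(\Delta))=s$ by the computation $s_{s\circ q(f,\Delta)}=s_s=s$, whereas you deduce $s=s_f$ from the pull-back property of the canonical square; both rest on the same identification $g^*(\delta(\Delta))=s_g$, which the paper likewise states without detailed proof.
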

\begin{proof}
We have to show that for any $\Gamma'\in CC$ the function
\begin{eq}\llabel{2015.07.19.eq9}
Hom(\Gamma',\Delta)\sr Hom(\Gamma',\Gamma)\times_{Ob_1(\Gamma')}\wt{Ob}_1(\Gamma')
\end{eq}
defined by the square (\ref{2009.12.28.eq2}) is a bijection. Unfolding the definitions we see that this function sends $g:\Gamma'\sr \Delta$ to the pair $(g\circ p_{\Delta}, g^*(\delta(\Delta)))$ and that the fiber product is relative to the function from $Hom(\Gamma',\Gamma)$ to $Ob_1(\Gamma')$ that sends $f$ to $f^*(\Delta)$ and the function from $\wt{Ob}_1(\Gamma')$ to $Ob_1(\Gamma')$ that sends $s$ to $\partial(s)$.

Note that $g^*(\delta(\Delta))=s_g$ where $s$ is the $s$-operation of C-systems (see \cite[Definition 2.3]{Csubsystems}) and $g\circ p_{\Delta}$ is the morphism that we denoted in \cite{Csubsystems} by $ft(g)$. 

Let $f_1,f_2:\Gamma'\sr \Delta$ be two morphisms such that their images under  (\ref{2015.07.19.eq9}) coincide i.e. such that $ft(f_1)=ft(f_2)$ and $s_{f_1}=s_{f_2}$. This implies that $f_1=f_2$ in view of \cite[Definition 2.3(3)]{Csubsystems}. Therefore the function (\ref{2015.07.19.eq9}) is injective.

Let $f:\Gamma'\sr \Gamma$ be a morphism and $s\in \wt{Ob}_1(\Gamma')$ a section such that $\partial(s)=f^*(\Delta)$. Then the composition $s\circ q(f,\Delta)$ is a morphism $f':\Gamma'\sr \Delta$ such that $f'\circ p_{\Delta}=f$. We also have 
$$s_{f'}=s_{s\circ q(f,\Delta)}=s_s=s$$
which proves that (\ref{2009.12.28.eq2}) is surjective. This completes the proof of Lemma \ref{2009.12.28.l1}.
\end{proof}
Let $\Gamma\in Ob(CC)$ and $\Delta\in Ob_1(\Gamma)$. By construction, $(Yo(\Gamma);v(\Delta))$ is the standard fiber product of the morphisms $v(\Delta)$ and $\partial$ in the category of presheaves. On the other hand $Yo(\Delta)$ is a fiber product of the same two morphisms by Lemma \ref{2009.12.28.l1}. Therefore there exists a unique isomorphism
$$\gamma_{\Delta}:(Yo(\Gamma);v(\Delta))\sr Yo(\Delta)$$
such that 
\begin{eq}\llabel{2015.07.19.eq14}
\gamma_{\Delta}\circ \wt{v}(\delta(\Delta))=Q(v(\Delta))
\end{eq}
and 
\begin{eq}\llabel{2015.07.19.eq15}
\gamma_{\Delta}\circ Yo(p_{\Delta})=p_{Yo(\Gamma),v(\Delta)}
\end{eq}

It remains to verify the four conditions of Problem \ref{2015.07.17.prob1} since the conditions of Lemma \ref{2015.07.19.l2}(2) are obviously satisfied. 

We have that $Yo(pt)\sr pt$ is an isomorphism. 

The second condition is (\ref{2015.07.19.eq17}). 

The third condition is (\ref{2015.07.19.eq15}). 

It remains to verify the fourth condition. Let $f:\Gamma'\sr \Gamma$ and $\Delta\in Ob_1(\Gamma)$. We need to show that
\begin{eq}\llabel{2015.07.19.eq11}
\gamma_{f^*(\Delta)}\circ Yo(q(f,\Delta))=Q(Yo(f),v(\Delta))\circ \gamma_{\Delta}
\end{eq}
Two of the morphisms that are involved in the condition can be seen on the diagram
$$
\begin{CD}
(Yo(\Gamma'); v(f^*(\Delta))) @>\gamma_{f^*(\Delta)}>> Yo(f^*(\Delta)) @>Yo(q(f,\Delta))>> Yo(\Delta) @>\wt{v}(\delta(\Delta))>> \wt{Ob}_1\\ 
@Vp_{Yo(\Gamma'), v(f^*(\Delta))}VV @VYo(p_{f^*(\Delta)})VV @VVYo(p_{\Delta})V @VVpV\\
Yo(\Gamma') @= Yo(\Gamma') @>Yo(f)>> Yo(\Gamma) @>v(\Delta)>> Ob_1
\end{CD}
$$
By Lemma \ref{2009.12.28.l1}, $Yo(\Delta)$ is a fiber product with the projections $Yo(p_{\Delta})$ and $\wt{v}(\delta(\Delta))$. Therefore it is sufficient to verify that the compositions of the two sides of (\ref{2015.07.19.eq11}) with the projections are equal, i.e., we have to prove two equalities:
\begin{eq}\llabel{2015.07.19.eq12}
\gamma_{f^*(\Delta)}\circ Yo(q(f,\Delta))\circ Yo(p_{\Delta})=Q(Yo(f),v(\Delta))\circ \gamma_{\Delta}\circ Yo(p_{\Delta})
\end{eq}
and
\begin{eq}\llabel{2015.07.19.eq13}
\gamma_{f^*(\Delta)}\circ Yo(q(f,\Delta))\circ \wt{v}(\delta(\Delta))=Q(Yo(f),v(\Delta))\circ \gamma_{\Delta}\circ \wt{v}(\delta(\Delta))
\end{eq}
For the (\ref{2015.07.19.eq12}) we have
$$\gamma_{f^*(\Delta)}\circ Yo(q(f,\Delta))\circ Yo(p_{\Delta})=\gamma_{f^*(\Delta)}\circ Yo(p_{f^*(\Delta)})\circ Yo(f)=$$$$p_{Yo(\Gamma'), v(f^*(\Delta))}\circ Yo(f)=p_{Yo(\Gamma'), Yo(f)\circ v(\Delta)}\circ Yo(f)$$
where the first equality is by the commutativity of the canonical squares in $CC$ and the fact that $Yo$ is a functor, the second by (\ref{2015.07.19.eq14}) and the third one by (\ref{2015.07.19.eq17}). On the other hand
$$Q(Yo(f),v(\Delta))\circ \gamma_{\Delta}\circ Yo(p_{\Delta})=Q(Yo(f),v(\Delta))\circ p_{Yo(\Gamma),v(\Delta)}=p_{Yo(\Gamma'), Yo(f)\circ v(\Delta)}\circ Yo(f)$$
where the first equality is by (\ref{2015.07.19.eq14}) and the second by the commutativity of the squares (\ref{2015.07.11.eq1}). 

For (\ref{2015.07.19.eq13}) we have 
$$\gamma_{f^*(\Delta)}\circ Yo(q(f,\Delta))\circ \wt{v}(\delta(\Delta))=\gamma_{f^*(\Delta)}\circ \wt{v}(q(f,\Delta)^*(\delta(\Delta)))=$$$$\gamma_{f^*(\Delta)}\circ \wt{v}(\delta(f^*(\Delta)))=Q(v(f^*(\Delta)))$$
where the first equality is by (\ref{2015.07.19.eq16}), the second follows from a simple computation in $CC$ and the third one is by (\ref{2015.07.19.eq14}). On the other hand one has
$$Q(Yo(f),v(\Delta))\circ \gamma_{\Delta}\circ \wt{v}(\delta(\Delta))=Q(Yo(f),v(\Delta))\circ Q(v(\Delta))=Q(Yo(f)\circ v(\Delta))=Q(f^*(\Delta))$$

where the first equality is by (\ref{2015.07.19.eq14}), the second one by Lemma \ref{2015.04.14.l0} and the third one by (\ref{2015.07.19.eq17}). 

This completes Construction \ref{2014.09.18.constr3}. 
\end{construction}

The second construction that we provide for Problem \ref{2014.09.18.prob3} is as follows. For a set $M$ let $Rp_n(M)$ be the set of subsets of $(\dots(M\times M)\times\dots)\times M$ where $M$ occurs in the expression $n+1$ times. Let $Rp(M)$ be the category with
$$Ob(Rp(M))=\amalg_{n\ge 0} Rp_n(M)$$
$$Mor(Rp(M))=\amalg_{(m,X),(n,Y)}Fun(X,Y)$$
where $Fun(X,Y)$ is the set of functions from $X$ to $Y$ and the identity morphisms and compositions of morphisms are given in the obvious way. 

If our theory has a universe then the category $Sets$ is defined and there is a functor $FF:Rp(M)\sr Sets$ that sends $(n,X)$ to $X$ and acts on morphisms in the obvious way. This functor is fully faithful. (Note that it is not an inclusion of categories since, for example, the empty subset is an element of each of $Rp_n(M)$ so that there are objects $(n,\emptyset)$ for all $n$ which are all mapped by $FF$ to the one empty set of $Sets$).

Let $CC$ be a C-system. Consider ${\cal C}=Funct(CC^{op},Rp(Mor(CC)))$. Define a universe $(p:\wt{O}\sr O, p_{\_}, Q(-))$ in $\cal C$ as follows. 

For $\Gamma\in CC$ let 
$$Ob'_1(\Gamma)=\{f\in Mor(CC)|\, f=Id_{codom(f)})$$
and let $\iota_{\Gamma}:Ob_1'(\Gamma)\sr Ob_1(\Gamma)$ be the bijection defined by the codomain function $codom$. These bijections together with the structure of a presheaf on the family of sets $Ob_1(\Gamma)$ define a structure of a presheaf on the family of sets $Ob_1'(\Gamma)$. Let 
$$O(\Gamma)=(0,Ob_1'(\Gamma))\in Rp(Mor(CC))$$
The structure of a presheaf of sets on $Ob_1'$ defines a structure of an element of $\cal C$ on $O$.

Next let
$$\wt{O}(\Gamma)=(0,\wt{Ob}_1(\Gamma))\in Rp(Mor(C))$$
The structure of a presheaf on $\wt{Ob}_1$ provide $\wt{O}$ with a structure of an object of $\cal C$. The morphism of presheaves $\partial$ defines in an obvious way a morphism $p:\wt{O}\sr O$ in $\cal C$. Let us construct a structure of a universe on $p$.

Let $X\in {\cal C}$ and $F:X\sr O$. For $\Gamma\in CC$ let $X(\Gamma)=(n,X_0(\Gamma))$ where $X_0(\Gamma)$ is an element of $Rp_n(Mor(CC))$. Then 
$$(X;F)_0(\Gamma)=\{(x_0,s)\in X_0(\Gamma)\times \wt{Ob}_1(\Gamma)\,|\, F(x_0)=Id_{codom(s)}\}\in Rp_{n+1}(Mor(CC))$$
is an element of $Rp_{n+1}(Mor(CC))$ and $(X;F)(\Gamma)=(n+1,(X;F)_0(\Gamma))$ is an element of $Rp(Mor(CC))$. 

It is easy to equip the family $(X;F)(\Gamma)$ of elements of $Rp(Mor(CC))$ with a structure of an object of $\cal C$ and equally easy to define morphisms $p_{X,F}:(X;F)\sr X$ and $Q(F): (X;F)\sr \wt{O}$. 

We have also a functor $I:CC\sr {\cal C}$ that extends the family of sets 
$$X\mapsto (Y\mapsto (0,Mor(X,Y)))$$
The image of the final object of $CC$ under this functor is a final object in $\cal C$ which completes the description of a universe category structure on $\cal C$.
\begin{lemma}
\llabel{2015.07.21.l1}
For any $F:X\sr O$ the square
\begin{eq}\llabel{2015.07.21.eq1}
\begin{CD}
(X;F) @>Q(F)>> \wt{O}\\
@Vp_{X,F}VV @VVpV\\
X @>F>> O
\end{CD}
\end{eq}
is a pull-back square in $\cal C$.
\end{lemma}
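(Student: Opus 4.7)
The plan is to reduce the claim to a pointwise verification and then to a standard set-theoretic fact. First I would recall that since $\cal C = Funct(CC^{op}, Rp(Mor(CC)))$, limits in $\cal C$ that exist pointwise in the target can be computed pointwise. Thus it suffices to show that for every $\Gamma \in CC$ the square obtained by evaluating (\ref{2015.07.21.eq1}) at $\Gamma$ is a pull-back square in $Rp(Mor(CC))$.

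Next, since the functor $FF: Rp(Mor(CC)) \sr Sets$ is fully faithful, a square in $Rp(Mor(CC))$ whose apex is given as a genuine object of $Rp(Mor(CC))$ is a pull-back square there if and only if its image under $FF$ is a pull-back in $Sets$. Here the apex $(X;F)(\Gamma) = (n+1, (X;F)_0(\Gamma))$ is an object of $Rp(Mor(CC))$ because $(X;F)_0(\Gamma)$ was defined as a subset of $X_0(\Gamma) \times \wt{Ob}_1(\Gamma)$, which embeds in an iterated product of $Mor(CC)$. Unwinding the definitions, the underlying set of $(X;F)_0(\Gamma)$ is exactly the set-theoretic fiber product of $F(\Gamma): X_0(\Gamma) \sr Ob_1'(\Gamma)$ and the map induced by $p$ on $\Gamma$-sections (which sends $s$ to $Id_{codom(s)}$), and the morphisms $p_{X,F}$ and $Q(F)$ are by construction the two canonical projections. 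Therefore the pointwise pull-back property is immediate from the familiar description of fiber products of sets.

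It remains to confirm that these pointwise pull-back squares come from a commuting square in $\cal C$, i.e., that the construction $\Gamma \mapsto (X;F)(\Gamma)$ is genuinely a presheaf and that $p_{X,F}$ and $Q(F)$ are natural transformations. This is automatic from how the construction was set up: a morphism $\Gamma' \sr \Gamma$ in $CC$ acts on pairs $(x_0, s)$ by the actions on $X_0$ and $\wt{Ob}_1$, and the subset condition $F(x_0) = Id_{codom(s)}$ is preserved because $F$ and $p$ are morphisms of presheaves, so they commute with the restrictions.

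The main obstacle is purely bookkeeping: one must keep careful track of the ad hoc presheaves $O$, $\wt{O}$ and $(X;F)$ built out of elements of $Rp_n(Mor(CC))$, and verify that the projections $p_{X,F}$ and $Q(F)$ as defined agree at each $\Gamma$ with the canonical set-theoretic projections; once this is in place the pull-back property itself is an entirely routine consequence of the pointwise criterion combined with full faithfulness of $FF$.
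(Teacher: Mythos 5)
Your argument is correct and is essentially the route the paper itself takes: the paper's proof invokes the fully faithful functor $\Phi:{\cal C}\sr PreShv(CC)$ induced by $FF$ and observes that it sends the square to a standard pull-back of presheaves, which is exactly your pointwise-fiber-product computation combined with reflection of pull-backs along the fully faithful $FF$. (Only the reflection direction of your ``if and only if'' for $FF$ is needed, and only that direction is automatic; this does not affect the proof.)
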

\begin{proof}
One can either give a direct  proof which would not require an extra universe or one can argue that the functor $FF$ defines a functor $\Phi:{\cal C}\sr PreShv(CC)$ which is fully faithful and which maps  squares (\ref{2015.07.21.eq1}) to standard pull-back squares in the category of presheaves of sets.
\end{proof}

\begin{problem}
\llabel{2015.07.21.prob1}
To construct an isomorphism $H:CC\sr CC({\cal C},p)$ where $({\cal C},p)$ is the universe category constructed above.
\end{problem}
There are two constructions for this problem. One we don't describe here because giving its detailed description would take a lot of space and add little understanding. It is a direct construction based on Construction \ref{2015.07.17.constr1} and Lemma \ref{2015.07.19.l2} that parallels Construction \ref{2014.09.18.constr3}. This direct construction would not use any extra universes and, in combination with the construction of $({\cal C},p)$ based on the direct proof of Lemma \ref{2015.07.21.l1} would provide a construction for Problem \ref{2014.09.18.prob3} that does not require any additional universes. 

The construction that we give below uses Construction \ref{2014.09.18.constr3} and therefore requires an extra universe.
\begin{construction}\rm
\llabel{2015.07.21.constr1}
Let $\Phi:{\cal C}\sr PreShv(CC)$ be the functor defined by $FF$. Since $FF$ is fully faithful so is $\Phi$. We have, by definition
$$\Phi(O)=Ob_1'$$
$$\Phi(\wt{O})=\wt{Ob}_1$$
The bijections $\iota_{\Gamma}$ give us an isomorphism of presheaves 
$$\iota:\Phi(O)=Ob_1'\sr Ob_1$$
which commute with $\Phi(p)$ and $\partial$ and together with $\Phi$ form a universe category functor $(\Phi,\iota,Id_{\wt{Ob}_1})$. This universe category functor satisfies the conditions of Lemma \ref{2014.09.18.l1}(2) and therefore the homomorphism $H(\Phi,\iota,Id_{\wt{Ob}_1})$ is an isomorphism. Composing the isomorphism of Construction \ref{2014.09.18.constr3} the inverse to this isomorphism we obtain a solution to Problem \ref{2015.07.21.prob1}.
\end{construction}
The direct construction of the universe category $({\cal C},p)$ does not increase the universe level but it uses the operation of taking the set of subsets that in type theory requires the propositional resizing rule in order to be defined inside a given universe. Here is an outline of a third construction that gives an even ``tighter'' universe category $({\cal C},p)$ with an isomorphism $CC\sr CC({\cal C},p)$.

\begin{construction}\rm
\llabel{2015.07.25.constr1}
Define by induction on $n$ pairs $(C_n,\Phi_n)$ where $C_n$ is a set and $\Phi_n:C_n\sr PreShc(CC)$ is a function as follows:
\begin{enumerate}
\item for $n=0$ we set $C_0=\{pt,U,\wt{U}\}$ and 
$$\Phi_0(pt)=pt$$
$$\Phi_0(U)=Ob_1$$
$$\Phi_0(\wt{U})=\wt{Ob}_1$$
where $pt$ on the right hand side of the first equality is the final object of $PreShv(CC)$,
\item for the successor of $n$ we set
$$C_{n+1}=\amalg_{X\in C_n}Hom_{PreShv(CC)}(\Phi_n(X), Ob_1)$$
and
$$\Phi_{n+1}(X,F)=(\Phi_n(X);F)$$
where $(X;F)$ is defined using standard fiber products in $PreShv(CC)$. 
\end{enumerate}
We then define
$$Ob({\cal C})=\amalg_{n\ge 0}C_n$$
$$Mor({\cal C})=\amalg_{(m,X),(n,Y)\in Ob({\cal C})}Hom_{PreShv(CC)}(\Phi_m(X),\Phi_n(Y))$$
The composition and the identity morphisms are defined in such a way as to make the pair of maps 
$$\Phi_{Ob}=\amalg_n \Phi_n$$
$$\Phi_{Mor}=\amalg_{(m,X),(n,Y)\in Ob({\cal C})} i(\Phi_m(X),\Phi_n(Y))$$
where $i(F,G)$ is the inclusion of $Hom_{PreShv(CC)}(F,G)$ into $Mor(PreShv(CC))$, into a functor. This functor, which we denote by $\Phi$, is then fully faithful. 

One proves easily that $pt$ is a final object of $\cal C$. One defines the universe morphism in $\cal C$ as the morphism $p:\wt{U}\sr U$ that is mapped by $\Phi$ to $\partial$. Given $(m,X)\in Ob({\cal C})$ and a morphism $F:(m,X)\sr U$ one defines $((m,X);F)$ as $(m+1,(X,\Phi(F)))$. This object is a vertex of the square
\begin{eq}\llabel{2015.07.25.eq1}
\begin{CD}
((m,X);F) @>Q(F)>> \wt{U}\\
@Vp_{(m,X),F}VV @VVpV\\
(m,X) @>F>> U
\end{CD}
\end{eq}
that is defined by the condition that it is mapped by $\Phi$ to the square
\begin{eq}\llabel{2015.07.25.eq2}
\begin{CD}
(\Phi_m(X);\Phi(F)) @>Q(\Phi(F))>> \wt{Ob}_1\\
@Vp_{\Phi_m(X),\Phi(F)}VV @VVpV\\
\Phi_m(X) @>\Phi(F)>> \wt{Ob}_1
\end{CD}
\end{eq}
Since $\Phi$ is fully faithful and the square (\ref{2015.07.25.eq2}) is a pull-back square, the square (\ref{2015.07.25.eq1}) is a pull-back square. This provides us with a universe structure on $p$ and completes the construction of the universe category $({\cal C},p)$. 

The functor $\Phi$ together with two identity morphisms forms a universe category functor ${\bf \Phi}=(\Phi,Id_{Ob_1},Id_{\wt{Ob}_1})$ that satisfies the conditions of Lemma \ref{2015.07.19.l2}(2). Therefore $\bf\Phi$ defines an isomorphism
$$CC({\cal C},p)\sr CC(PreShv(CC),\partial)$$
composing the isomorphism of Construction \ref{2014.09.18.constr3} with the inverse to this isomorphism we obtain a solution to Problem \ref{2014.09.18.prob3}. This completes Construction \ref{2015.07.25.constr1}.
\end{construction}
\begin{remark}\rm
\llabel{2015.07.25.rem1}
The category $\cal C$ of Construction \ref{2015.07.25.constr1} has all of the structures of a C-system and these structures satisfy all of the required properties except for the property that $l^{-1}(0)=\{pt\}$. We would like to call such objects ``generalized C-systems''. They seem to appear also in other examples and may play an important role in the future.
\end{remark}

\subsection{A universe category defined by a precategory}

The following problem was inspired by a question from an anonymous referee of \cite{Csubsystems}. Here we have to use the word precategory as in the definition of a C-system since the construction for this problem is not invariant under equivalences. Let us recall the following definition that also introduces the notations to be used below.
\begin{definition}
\llabel{2015.04.22.def1}
A category with fiber products is a category together with, for all pairs of morphisms of the form $f:X\sr Z$, $g:Y\sr Z$, fiber squares
$$
\begin{CD}
(X,f)\times_Z (Y,g) @>pr^{(X,f),(Y,g)}_2>> Y\\
@Vpr^{(X,f),(Y,g)}_1 VV @VVg V\\
X @>f>> Z
\end{CD}
$$
We will often abbreviate these main notations in various ways. The morphism $pr_2\circ g=pr_1\circ f$ from $(X,f)\times(Y,g)$ to $Z$ is denoted by $f\dd g$.
\end{definition}
\begin{problem}
\llabel{2015.07.25.prob1}
Let $C$ be a precategory with a final object $pt$ and fiber products. To construct a C-system $CC$ and an equivalence of categories $J_*:CC\sr C$, $J^*:C\sr CC$.
\end{problem}
\begin{remark}\rm
Note that if we required an isomorphism $CC\sr C$ then the problem would have no solution since, for example, there is no C-system whose set of objects is the set with two elements. Indeed, one of these elements, let us denote it by $X$, will have to have length $n>0$. Then $l(p_X^*(X))=l(X)+1$. Therefore $p_X^*(X)\ne X$ and $p_X^*(X)\ne pt$ which contradicts the assumption that $CC$ has only two objects. 
\end{remark}
We start with a general construction that does not require $C$ to have fiber products or a final object. The parts of it that do not concern C-systems must have certainly be known for a long time but we do not know where it was originally introduced.

For a precategory $C$ let $U_C$ be the presheaf such that
$$U_C(X)=\{(f,g)\,\,\,where\,\,\,f:X\sr Y\,\,\,and\,\,\,g:Z\sr Y\}$$
and for $a:X'\sr X$, 
$$U_C(a)(f,g)=(a\circ f,g)$$
One proves easily that this presheaf data defines a presheaf.

Let $\wt{U}_C$ be the presheaf such that
$$\wt{U}_C(X)=\{(f',g)\,\,\,where\,\,\,f':X\sr Z\,\,\,and\,\,\,g:Z\sr Y\}$$ 
and for $a:X'\sr X$, 
$$\wt{U}_C(a)(f',g)=(a\circ f',g)$$
Again one proves easily that $\wt{U}_C$ this presheaf data defines a presheaf. 

Let $p_C:\wt{U}_C\sr U_C$ be the morphism given by
$$(p_C)_X(f',g)=(f'\circ g,g)$$
One proves easily that this family of maps of sets is a morphism of presheaves. 

As in Construction \ref{2014.09.18.constr3} let $Yo$ be the Yoneda embedding and let 
$$v_{X}:U_C(X)\sr Hom_{PreShv}(Yo(X),U_C)$$
$$\wt{v}_X:\wt{U}_C(X)\sr Hom_{PreShv}(Yo(X),\wt{U}_C)$$
be the standard bijections which we will often write as $v$ and $\wt{v}$. 
\begin{lemma}
\llabel{2015.07.25.l1}
For $(f,g)\in U_C(X)$, $(f',g')\in \wt{U}_C(X')$ and $u:X'\sr X$ the square
\begin{eq}\llabel{2015.07.25.eq3}
\begin{CD}
Yo(X') @>\wt{v}(f',g')>> \wt{U}_C\\
@VYo(u)VV @VVp_CV\\
Yo(X) @>v(f,g)>> U_C
\end{CD}
\end{eq}
commutes if and only if $g'=g$ and the square
\begin{eq}\llabel{2015.07.25.eq4}
\begin{CD}
X' @>f'>> Z\\
@VuVV @VVgV\\
X @>f>> Y
\end{CD}
\end{eq}
commutes. The square (\ref{2015.07.25.eq3}) is a pull-back square if and only if $g=g'$ and the square (\ref{2015.07.25.eq4}) is a pull-back square.
\end{lemma}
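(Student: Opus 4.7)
The plan is to translate both assertions into elementwise statements by means of the Yoneda lemma, and then read off the claimed equivalences from the explicit formulas for $U_C$, $\wt{U}_C$, $p_C$, $v$ and $\wt{v}$.

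For the commutativity part, I would use that a natural transformation out of a representable $Yo(X')$ is determined by its value at $Id_{X'}\in Yo(X')(X')$. Thus the square (\ref{2015.07.25.eq3}) commutes if and only if
$$p_C\bigl(\wt{v}(f',g')_{X'}(Id_{X'})\bigr)=v(f,g)_{X'}\bigl(Yo(u)_{X'}(Id_{X'})\bigr)$$
as elements of $U_C(X')$. Unfolding the definitions, the left hand side equals $p_C(f',g')=(f'\circ g',g')$ and the right hand side equals $v(f,g)_{X'}(u)=U_C(u)(f,g)=(u\circ f,g)$. Equality of these pairs is precisely the assertion that $g=g'$ (with matching source and target) together with $u\circ f=f'\circ g$, i.e., the commutativity of the square (\ref{2015.07.25.eq4}).

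For the pullback part, I would use that finite limits in $PreShv(CC)$ are computed objectwise. Hence (\ref{2015.07.25.eq3}) is a pullback if and only if, for every $W\in C$, the canonical map
$$Hom_C(W,X')\sr Hom_C(W,X)\times_{U_C(W)}\wt{U}_C(W)$$
is a bijection. The same Yoneda computation shows that this map sends $a:W\sr X'$ to $(a\circ u,(a\circ f',g'))$, while an element of the fiber product is a pair $(b,(h',k))$ with $b:W\sr X$, $(h',k)\in\wt{U}_C(W)$, $k=g$, and $b\circ f=h'\circ g$. Assuming the first half of the lemma (so $g'=g$ and $u\circ f=f'\circ g$), the bijectivity condition becomes: for every $b:W\sr X$ and $h':W\sr Z$ with $b\circ f=h'\circ g$, there exists a unique $a:W\sr X'$ with $a\circ u=b$ and $a\circ f'=h'$. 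This is exactly the universal property of (\ref{2015.07.25.eq4}) being a pullback in $C$.

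The only bookkeeping subtlety, and the one place where a reader might stumble, is that elements of $U_C$ and $\wt{U}_C$ carry implicit codomains $Y,Z,Y',Z'$, so the condition $g=g'$ must be read as forcing $Z=Z'$ and $Y=Y'$ before the square (\ref{2015.07.25.eq4}) even makes literal sense; once this matching is imposed, both equivalences are direct transcriptions of the relevant universal properties.
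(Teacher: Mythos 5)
Your proposal is correct and follows essentially the same route as the paper: both reduce the pullback condition to an objectwise statement about sets (the paper checks fibers of the section-wise square over each $a:X''\sr X$, you check bijectivity of the canonical map into the fiber product, which is equivalent), and both identify the resulting condition with the universal property of the square in $C$. Your explicit Yoneda computation for the commutativity claim fills in what the paper dismisses as obvious, and your closing remark about the implicit codomains matches the paper's observation that $g=g'$ is what makes the two fiber descriptions coincide.
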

\begin{proof}
The assertion about commutativity is obvious. The proof of the assertion about being a pull-back square is as follows. The square (\ref{2015.07.25.l1}) is a pull-back square if and only of for all $X''$ the corresponding square of sections on $X''$ is a pull-back square of sets. This square of sections is of the form
\begin{eq}\llabel{2015.07.25.eq5}
\begin{CD}
Hom(X'',X') @>r_1>>  \{(f'_0:X''\sr Z_0,\,\,\,g_0:Z_0\sr Y_0)\}\\
@Vs_1VV @VVr_2V\\
Hom(X'', X) @>s_2>> \{(f_0:X''\sr Y_0,\,\,\,g_0:Z_0\sr Y_0)\}
\end{CD}
\end{eq}
where $r_1(a')=(a'\circ f',g')$, $r_2(f'_0,g'_0)=(f'_0\circ g'_0,g'_0)$, $s_1(a)=a\circ u$, $s_2(a)=(a\circ f,g)$.

To check that (\ref{2015.07.25.eq5}) is a pull-back square it is sufficient to check that for every $a\in Hom(X'',X)$ the map $s_1^{-1}(a)\sr r_2^{-1}(s_2(a))$ defined by $r_1$ is a bijection. We have 
$$s_1^{-1}(a)=\{a':X''\sr X'\,|\,a'\circ u=a\}$$
and
\begin{eq}\llabel{2015.07.25.eq6}
r_2^{-1}(s_2(a))=\{(v:X''\sr Z,g)\,|\,v\circ g=a\circ f\}
\end{eq}
and the map defined by $r_1$ maps $a'$ to $(a'\circ v,g)$. 

Applying the same reasoning to the condition that the square is (\ref{2015.07.25.eq4}) is pull-back we see that it is equivalent to the condition that for all $X''$ and all $a:X''\sr X$ the map
from the set
$$(-\circ u)^{-1}(a)=\{a':X''\sr X'\,|\,a'\circ u=a\}$$
to the set
\begin{eq}\llabel{2015.07.25.eq7}
(-\circ g)^{-1}(a\circ f)=\{v:X''\sr Z\,|\,v\circ g=a\circ f\}
\end{eq}
given by $a'\mapsto a'\circ f'$, is a bijection. Since $g$ in (\ref{2015.07.25.eq6}) the sets on the right hand sides of (\ref{2015.07.25.eq6}) and (\ref{2015.07.25.eq7}) are in the obvious bijection that is compatible with the functions from $\{a':X''\sr X'\,|\,a'\circ u=a\}$ and therefore these two conditions are equivalent. 
\end{proof}
Applying our main construction to $(PreShv(CC),p_C)$ we obtain, for any precategory $C$, a C-system $CC(C)=CC(PreShv(CC),p_C)$.  
\begin{problem}
\llabel{2015.07.25.prob3}
Let $C$ be a precategory with a final object $pt$. To construct a function $J^*_1:Ob(C)\sr Ob_1(PreShv(C),p_C)$ and a family of isomorphisms $j_X:Yo(X)\sr int(J^*(X))$. 
\end{problem}
\begin{construction}\rm
\llabel{2015.07.25.constr3}
Let $X\in Ob(C)$. The pull-back square
$$
\begin{CD}
X @>Id_X>> X\\
@V\pi_XVV @VV\pi_XV\\
pt @>Id_{pt}>> pt
\end{CD}
$$
defines by Lemma \ref{2015.07.25.l1} a pull-back square
\begin{eq}\llabel{2015.07.25.eq8}
\begin{CD}
Yo(X) @>\wt{v}(Id_X,\pi_X)>> \wt{U}_C\\
@VYo(\pi_X)VV @VVp_CV\\
Yo(pt) @>v(Id_{pt},\pi_X)>> U_C
\end{CD}
\end{eq}
Let $\psi:pt\sr Yo(pt)$ be the unique isomorphism. Set 
$$J^*(X)=(pt,\psi\circ v(Id_{pt},\pi_X))$$
Then 
$$\pi_{Yo(X)}\circ \psi=Yo(\pi_X)$$
and therefore $\pi_{Yo(X)}*\wt{v}(Id_X,\pi_X)$ is a well defined morphism from $Yo(X)$ to $J^*(X)$.  It is easy to prove now that since (\ref{2015.07.25.eq8}) is a pull-back square this morphism is an isomorphism.
\end{construction}
\begin{remark}\rm
\llabel{2015.07.25.rem2}
For $C$ with a final object $pt$, the set $Ob_1(PreShv(CC),p_C)$ is in a constructive bijection with the set of pairs $(f:pt\sr Y,\,\,\, g:Z\sr Y)$ which is given, in the notation of Construction \ref{2015.07.25.constr3}, by the map $(f,g)\mapsto \psi\circ v(f,g)$. After composition with this bijection the function $J^*$ takes $X$ to $(Id:pt\sr pt, \pi_X:X\sr pt)$. The function $(f,g)\mapsto dom(g)$ defines a one-sided inverse to $J^*_1$ so that $J^*_1$ is always a split monomorphism.
\end{remark}
\begin{problem}
\llabel{2015.07.25.prob4}
Suppose that $C$ is a category with a final object $pt$ and fiber products. To construct a function $J_*:Ob(CC(C))\sr Ob(C)$ and for every $\Gamma\in Ob(CC(C))$ an isomorphism $\sigma_{\Gamma}:Yo(J_*(\Gamma))\sr int(\Gamma)$.
\end{problem}
\begin{construction}\rm 
\llabel{2015.07.25.constr4}
We first construct by induction on $n$, pairs $(J_n,\sigma_{n})$ where  
$$J_n:Ob_n(PreShv,p_C)\sr Ob(C)$$
and $\sigma_n$ is a family of isomorphisms 
$$\sigma_{n}(A):Yo(J_n(A))\sr int_n(A)$$
given for all $A\in Ob_n(PreShv,p_C)$ as follows (we write $J$ instead of $J_n$ and $\sigma$ instead of $\sigma_n$):
\begin{enumerate}
\item For $n=0$ we set $J(A)=pt$ and $\sigma(A):Yo(pt)\sr pt$ is the unique isomorphism,
\item For the successor of $n$ we proceed as follows. Let $(B,F)\in Ob_{n+1}$ where $B\in Ob_n$ and $F:int(B)\sr U_C$. Then 
$$v^{-1}(\sigma_B\circ F)\in U_C(J(B))$$
is of the form
$$v^{-1}(\sigma_B\circ F)=(f:J(B)\sr Z,\,\,\,g:Y\sr Z)$$
Let
$$J(B,F)=(J(B),f)\times_Z (Y,g)$$
To define $\sigma(B,F)$ consider the diagram
$$
\begin{CD}
Yo(J(B,F)) @>\iota>> (Yo(J(B)),\sigma(B)\circ F) @>Q(\sigma(B),F)>> (int(B);F) @>Q(F)>> \wt{U}_C\\
@VYo(pr_1)VV @Vp_{Yo(J(B)),\sigma(B)\circ F}VV @Vp_{int(B),F}VV @VVp_CV\\
Yo(J(B)) @= Yo(J(B)) @>\sigma(B)>> int(B) @>F>> U_C
\end{CD}
$$
where $\iota$ is the morphism $Yo(pr_1)*\wt{v}(pr_2,g)$. This morphism is defined because
$$Yo(pr_1)\circ \sigma(B)\circ F=Yo(pr_1)\circ v(f,g)=(pr_1\circ f,g)$$
and
$$\wt{v}(pr_2,g)\circ p_C=(pr_2\circ g,g)$$
and $pr_1\circ f=pr_2\circ g$. By Lemma \ref{2015.07.25.l1} the square 
$$
\begin{CD}
Yo(J(B,F)) @>\wt{v}(pr_2,g)>> \wt{U}_C\\
@VYo(pr_1)VV @VVp_CV\\
Yo(J(B)) @>v(f,g)>> U_C
\end{CD}
$$
is a pull-back square which implies that $\iota$ is an isomorphism. We define 
$$\sigma(B,F)=\iota\circ Q(\sigma_B,F)$$
The morphism $Q(\sigma(B),F)$ is an isomorphism by Lemma \ref{2015.07.19.l1} and therefore $\sigma(B,F)$ is an isomorphism.
\end{enumerate}
We now define $J_*$ as the sum over $n\in\nn$ of $J_n$. This completes Construction \ref{2015.07.25.constr4}. 
\end{construction}
\begin{remark}\rm
\llabel{2015.07.25.rem3}
Using the bijection of Remark \ref{2015.07.25.rem2} we can look at the function from pairs $(f:pt\sr Z,g:Z\sr Y)$ to $Ob(C)$ corresponding to $J_1$. This function is given by 
$$(f,g)\mapsto (pt,f)\times_Z(Y,g)$$
When we compose it with $J^*$ and consider $J_*(J^*(X))$ we obtain $(pt,Id_{pt})\times_pt(X,\pi_X)$. Depending on the choice of the fiber product this element of $Ob(C)$ may be equal to $X$ or not but in any case there is a natural in $X$ isomorphism $X\sr J_*(J^*(X))$.
\end{remark}
We can now provide the following construction for Problem \ref{2015.07.25.prob1}.
\begin{construction}\rm
\llabel{2015.07.25.constr2}
Let $J^*:Ob(C)\sr Ob(CC(C))$ be the composition of the function $J^*_1$ of Construction \ref{2015.07.25.constr3} with the inclusion of $Ob_1(PreShv,p_C)$ into $Ob(CC(PreShv,p_C))$. We can extend it to a functor data setting:
$$J^*_{Mor}(f:X\sr Y)=int^{-1}_{J^*(X),J^*(Y)}(j_X^{-1}\circ Yo(f)\circ j_Y)$$
where $int_{\Gamma,\Gamma'}$ is the bijection
$$Hom_{CC(C)}(\Gamma,\Gamma')\sr Hom(int(\Gamma),int(\Gamma'))$$
defined by the functor $int$. It is easy to prove from definitions that it is a functor and, using the fact that both $Yo$ and $int$ are fully faithful, that $J^*$ is fully faithful. 

Similarly we can extend $J_*$ of Construction \ref{2015.07.25.constr4} to a functor data setting
$$(J_*)_{Mor}(f:\Gamma'\sr \Gamma)=Yo^{-1}_{J_*(\Gamma'),J_*(\Gamma)}(\sigma_{\Gamma'}\circ int(f)\circ \sigma_{\Gamma}^{-1})$$
where $Yo_{X,Y}$ is the bijection
$$Hom_C(X,Y)\sr Hom_{PreShv}(Yo(X),Yo(Y))$$
defined by the Yoneda embedding. Again it is easy to prove from definitions that this functor data is a functor and using the fact that both $int$ and $Yo$ are fully faithful that $J_*$ is fully faithful. 

After $J_*$ and $J^*$ have been extended to morphisms it makes sense to ask whether the families of isomorphisms  $j_X$ and $\sigma_{\Gamma}$ are natural in $X$ and $\Gamma$ respectively and one verifies easily that they indeed are.

Let $X\in Ob(C)$ then we have an isomorphism
$$
\begin{CD}
Yo(J_*(J^*(X))) @>\sigma_{J^*(X)}>> int(J^*(X)) @>j_X^{-1}>> Yo(X)
\end{CD}
$$
which is natural in $X$ and applying to it $Yo_{J_*(J^*(X)), X}^{-1}$ we get an isomorphism 
$$J_*(J^*(X))\sr X$$
which is again natural in $X$, i.e., we obtained a functor isomorphism $J_*\circ J^*\sr Id$.

Similarly, starting with, 
$$
\begin{CD}
int(\Gamma) @>\sigma^{-1}_{\Gamma}>>  Yo(J_*(\Gamma)) @>j_{J_*(\Gamma)}>> int(J^*(J_*(\Gamma)))
\end{CD}
$$
one obtains a functor isomorphism $Id\sr J^*\circ J_*$. This completes the Construction \ref{2015.07.25.constr2}. 
\end{construction} 
\begin{remark}\rm
\llabel{2015.07.25.rem4a}
It might be possible to provide a construction for Problem \ref{2015.07.25.prob1} that does not increase the universe level.
\end{remark}
\begin{remark}\rm
\llabel{2015.07.25.rem4}
The C-system $CC(C)$ does not require a choice of a final of object or fiber products in $C$ and in particular does not depend on such a choice. 

The functor $J^*:C\sr CC(C)$ requires a choice of a final object for its construction and depends on this choice. Let $B$ be the bijection 
$$Ob_1(PreShv(CC),p_C)\sr \{(f:pt_1\sr Z,\,\,\,g:Y\sr Z)\}$$
of Remark \ref{2015.07.25.rem2} defined by the choice of a final object $pt_1$. Let $J_{1,1}^*$ and $J_{1,2}^*$ be the functions $Ob(C)\sr Ob_1(PreShv,p_C)$ of Construction \ref{2015.07.25.prob3} defined by the choice of the final object $pt_1$ and a final object $pt_2$ respectively. Then one has
$$B(J_{1,1}^*(X))=(Id:pt_1\sr pt_1,\,\,\,\pi_{X,1}:X\sr pt_1)$$
and
$$B(J_{1,2}^*(X))=(a:pt_1\sr pt_2,\,\,\,\pi_{X,2}:X\sr pt_2)$$
where $a:pt_1\sr pt_2$ is the unique morphism. This shows that $J_{1,1}^*\ne J_{1,2}^*$ if $pt_1\ne pt_2$ and in particular that $J^*$ depends on the choice of the final object.

The fact that $J_*$ depends on the choice of fiber products is seen from the formula for $J_*(J^*(X))$ given in Remark \ref{2015.07.25.rem3}. 
\end{remark}

{\bf Conjecture}
\llabel{2009.12.27.prop1}
Let $\cal C$ be a category, $CC$ be a C-system and $M:CC\sr {\cal C}$ a functor such that $M(pt_{CC})$ is a final object of $\cal C$ and $M$ maps distinguished squares of $CC$ to pull-back squares of $\cal C$. Then there exists a universe $p_M:\wt{U}_M\sr U_M$ in $PreShv({\cal C})$ and a C-system homomorphism $M':CC\sr CC(PreShv({\cal C}), p_M)$ such that the square
$$
\begin{CD}
CC @>M>> {\cal C}\\
@VVM'V @VVV\\
CC(PreShv({\cal C}), p_M) @>int>> PreShv(C)
\end{CD}
$$
where the right hand side vertical arrow is the Yoneda embedding, commutes up to a functor isomorphism.

\def\cprime{$'$}


\comment{
\begin{definition}
\llabel{2009.12.27.def1}
Let $CC$ be a C-system. A universe model of $CC$ is a pair of a universe category $({\cal C},p)$ and a C-system homomorphism $CC\sr CC({\cal C},p)$.
\end{definition}

}

\comment{
???
\begin{remark}\rm
Construction \ref{} requires a inductive construction of families of sets that a priory can not be reduced to the inductive construction of families of subsets of a given set. 
In type theory this leads to the resulting object $({\cal C},p)$ being defined inside the same universe as $CC$ but the expression defining this object contains in it the next universe. 
\end{remark}
}

\end{document}